\documentclass[a4paper,leqno,final]{article}
\usepackage{etex}

\usepackage[british]{babel}

\usepackage[T1]{fontenc}
\usepackage{ae,aecompl}
\usepackage{amsmath}
\usepackage{amsthm}
\usepackage{amssymb}
\usepackage[all]{xy}
\usepackage{rotating}
\usepackage{enumitem}
\usepackage{aliascnt}
\usepackage{hyperref}
\usepackage{mathtools}
\usepackage{subfig}
\usepackage{graphicx}
\usepackage{longtable}
\usepackage{tabularx}
\usepackage{booktabs}
\usepackage{microtype}
\usepackage{ifpdf}

\usepackage{algorithm}
\usepackage{algorithmic}

\newcommand{\makedate}{\today}

\usepackage{tikzexternal}
\tikzexternalize
\tikzsetfigurename{urbstats}




%
\newtheorem{theorem}{Theorem}[section]

\newaliascnt{lemma}{theorem}
\newtheorem{lemma}[lemma]{Lemma}
\aliascntresetthe{lemma}

\newaliascnt{proposition}{theorem}
\newtheorem{proposition}[proposition]{Proposition}
\aliascntresetthe{proposition}

\newaliascnt{corollary}{theorem}
\newtheorem{corollary}[corollary]{Corollary}
\aliascntresetthe{corollary}

\newaliascnt{remark}{theorem}
\newtheorem{remark}[remark]{Remark}
\aliascntresetthe{remark}

\newaliascnt{definition}{theorem}
\newtheorem{definition}[definition]{Definition}
\aliascntresetthe{definition}

\newaliascnt{example}{theorem}
\newtheorem{example}[example]{Example}
\aliascntresetthe{example}

\newaliascnt{conjecture}{theorem}
\newtheorem{conjecture}[conjecture]{Conjecture}
\aliascntresetthe{conjecture}

\newaliascnt{openproblem}{theorem}

\aliascntresetthe{openproblem}

\newaliascnt{exercise}{theorem}

\aliascntresetthe{exercise}

\newaliascnt{algo}{theorem}

\aliascntresetthe{algo}

\newaliascnt{assumption}{theorem}

\aliascntresetthe{assumption}

\newaliascnt{notation}{theorem}

\aliascntresetthe{notation}

\addto\extrasbritish{

}
\newcounter{namedenum}
  {\end{list}}
%


\newcommand{\Magma}{\textsc{Magma}}
\newcommand{\NN}{\mathbb{N}}
\newcommand{\ZZ}{\mathbb{Z}}
\newcommand{\Atoms}{\mathcal{A}}
\newcommand{\Simples}{\mathcal{D}}
\newcommand{\pSimples}{\Simples^{\!{}^{\circ}\!}}

\newcommand{\s}{\sigma}

\DeclareMathOperator{\cl}{cl}
\newcommand{\pd}{\mathrm{pd}}
\newcommand{\PSeq}{\textsc{PSeq}}

\newcommand{\from}{\colon\!}
\newcommand{\URB}{\mathsf{URB}}
\newcommand{\Word}{\mathsf{Word}}
\newcommand{\id}{\mathbf{1}}
\newcommand{\Artin}[1]{\mathsf{#1}}
\newcommand{\rev}{\mathrm{rev}}
\newcommand\pto{\mathrel{\ooalign{\hfil$\mapstochar\mkern5mu$\hfil\cr$\to$\cr}}}
\let\leq\leqslant
\let\le\leqslant
\let\geq\geqslant
\let\ge\geqslant

\parindent=0pt
\parskip=0.55ex

\newcolumntype{L}{>{\(}l<{\)}}
\newcolumntype{C}{>{\(}c<{\)}}
\newcolumntype{R}{>{\(}r<{\)}}

\graphicspath{{images/}}

\begin{document}

\title{Normal forms of random braids}
\author{Volker Gebhardt\footnotemark[1]{ }\textsuperscript{,}\footnotemark[2]\hspace{0.3em} and Stephen Tawn\footnotemark[1]}
\date{\makedate}

\renewcommand{\thefootnote}{\fnsymbol{footnote}}
\footnotetext[1]{Both authors acknowledge support under Australian Research Council's Discovery Projects funding scheme (project number DP1094072).}
\footnotetext[2]{Volker Gebhardt acknowledges support under the Spanish Project MTM2010-19355.}

\maketitle
\renewcommand{\thefootnote}{\arabic{footnote}}

\begin{abstract}
\noindent
Analysing statistical properties of the normal forms of random braids, we observe that, except for an initial and a final region whose lengths are uniformly bounded (that is, the bound is independent of the length of the braid), the distributions of the factors of the normal form of sufficiently long random braids depend neither on the position in the normal form nor on the lengths of the random braids.
Moreover, when multiplying a braid on the right, the expected number of factors in its normal form that are modified, called the \emph{expected penetration distance}, is uniformly bounded.

We explain these observations by analysing the growth rates of two regular languages associated to normal forms of elements of Garside groups, respectively to the modification of a normal form by right multiplication.

A universal bound on the expected penetration distance in a Garside group yields in particular an algorithm for computing normal forms that has linear expected running time.
\end{abstract}


\section{Introduction}\label{S:Intro}%

Explicit computations play an increasingly important role in most areas of algebra; the study of braids is no exception.
In many situations, computations with braids involve choosing braids at random:  Some algorithms explicitly require a random braid to be generated; this is the case, for instance, in cryptographic protocols based on the braid group \cite{AAG,KoEtAl2000}.  At other times, a large collection of typical examples is to be generated; this is usually the case in computational experiments supporting theoretical research.

As $B_n$, the group of braids on~$n$ strands, is infinite, choosing braids at random is not a trivial task. There are various natural ways of choosing elements of~$B_n$ at random, and different approaches will yield different probability distributions on~$B_n$.  For both, computational experiments and applications (especially applications in cryptography), it is important to understand the statistical probabilities of samples of random elements generated using a particular method.%
\medskip

We consider in the following the \emph{braid monoid} $B_n^+$ defined by the presentation
\begin{equation}\label{E:BraidMonoid}
   B_n^+ =  \left\langle \s_1,\s_2,\ldots,\s_{n-1}
             \;\Bigg| \begin{array}{r@{\ =\ }lr}
                          \s_i\s_j & \s_j\s_i             & (1\le i<j<n) \\[0.5ex]
                  \s_i\s_{i+1}\s_i & \s_{i+1}\s_i\s_{i+1} & (1\le i<n-1)
               \end{array}
        \right\rangle^+
   \;,
\end{equation}
which follows Artin's presentaion for the braid group \cite{artin_braid_groups}.
As the relations of $B_n^+$ are homogeneous, the number of generators occurring in any expression of $x\in B_n^+$ is well-defined; we call this number the \emph{length} $|x|$ of~$x$.  We can then fix a non-negative integer $k$ and generate an element $x\in B_n^+$ of length~$k$.
More specifically, there are two possibilities:
\begin{itemize}
\item[(A)] For $i=1,2,\ldots,k$ independently choose $a_i\in\Atoms = \{\s_1,\s_2,\ldots,\s_{n-1}\}$ with a uniform
  probability distribution on $\Atoms$, or equivalently, consider the set $\Atoms^*|_k$ of all \emph{words} of length
  $k$ over the alphabet $\Atoms$, and choose an element of $\Atoms^*|_k$ at random with a uniform probability
  distribution on this set.
\item[(B)] Consider the set $B_n^+|_k = \{ x \in B_n^+ : |x| = k\}$ and choose an element of $B_n^+|_k$ at
  random with a uniform probability distribution on this set.
\end{itemize}
We will refer to (A) as \emph{generating uniformly random words}, and to (B) as \emph{generating uniformly random braids}.  We will write $\Word_k$, respectively $\URB_k$, for the corresponding probability measures on $B_n^+$.
Since the number of different words in $\Atoms^*|_k$ that represent the same element $x$ of $B_n^+|_k$ depends on $x$, generating uniformly random words results in a distribution of \emph{braids} which is very far from being uniform on $B_n^+|_k$.
Generating uniformly random braids is not easy; an algorithm whose time- and space-complexities are polynomial in both $n$ and~$k$ was given in~\cite{URB}.
\medskip

In this paper we analyse the generation of uniformly random braids and the generation of uniformly random words regarding some properties of the generated samples of braids.
The \emph{Garside normal form} defines a canonical way of expressing a braid as a sequence of permutations, so a probability distribution on the braid group induces a sequence of probability distributions on the symmetric group.  We are particularly interested in how the resulting distributions on the symmetric group depend on the position in this sequence.

The structure of the paper is as follows:
\autoref{S:Background} recalls
the Garside normal form; experts may skip this section.
\autoref{S:NF} contains our analysis of the normal forms of random braids.
In \autoref{SS:NF:StableRegion} we observe that there is a ``stabilisation'' occurring in the normal forms of long random braids in the sense that for sufficiently long braids the distributions on the symmetric group induced by the factors of the normal form depend neither on the position in the normal form nor on the lengths of the random braids, except for an initial and a final region whose lengths are uniformly bounded.
In \autoref{SS:NF:BoundedPD}, we give an explanation for this stabilisation phenomenon by demonstrating that the expected number of factors of the normal form of a braid that are modified when multiplying the braid on the right is uniformly bounded.
Finally, in \autoref{S:GarsideGroups}, we extend our analysis to general Garside groups and establish a criterion for deciding whether phenomena similar to the ones described above occur in a given Garside group.

\section{Background}\label{S:Background}%

This section contains a brief summary of the main notions referred to in the paper.  Specifically, we will recall Garside monoids and the Garside normal form.
For details and proofs we refer to~\cite{braid_epsteinetal,Dehornoy02}.

In a cancellative monoid $M$ with unit $\id$, we can define the \emph{prefix} partial order:
For $x,y\in M$, we say $x\preccurlyeq y$ if there exists $c\in M$ such that $xc=y$.
Similarly, we define the \emph{suffix} partial order by saying that $x\succcurlyeq y$ if there exists $c\in M$ such that $x=cy$.
We call $s\in M$ an \emph{atom}, if $s=ab$ (with $a,b\in M$) implies $a=\id$ or $b=\id$.
We write $\Atoms$ for the set of atoms of $M$.

A cancellative monoid $M$ is called a \emph{Garside monoid of spherical type}, if it is a lattice (that is, least common multiples and greatest common divisors exist and are unique) with respect to $\preccurlyeq$ and with respect to $\succcurlyeq$, if there are no strict infinite descending chains with respect to either $\preccurlyeq$ or $\succcurlyeq$, and if there exists an element $\Delta\in M$, such that
$\Simples=\{s\in M \mid s\preccurlyeq\Delta\}=\{s\in M \mid \Delta\succcurlyeq s\}$ is finite and generates $M$.
In this case, we call $\Delta$ a \emph{Garside element}, the elements of $\Simples$ the \emph{simple elements} (with respect to $\Delta$)
and the elements of $\pSimples = \Simples\setminus\{\id,\Delta\}$ the \emph{proper simple elements} (with respect to $\Delta$).
Moreover, we denote the $\preccurlyeq$-gcd and the $\preccurlyeq$-lcm of $x,y\in M$ by $x\wedge y$ respectively $x\vee y$.
It follows that, for $s\in \Simples$, there exists a unique element $\partial s\in \Simples$ such that $s\,\partial s=\Delta$.
\medskip

We assume for the rest of this section that $M$ is a Garside monoid of spherical type.
Since $\Simples$ generates $M$, every element $x\in M$ can be written in the form $x=x_1x_2\cdots x_m$ with $x_1,x_2,\ldots,x_m\in \Simples$.
The representation as a product of this form can be made unique by requiring that each simple factor is non-trivial and maximal with respect to $\preccurlyeq$.  More precisely, we say that $x=x_1x_2\cdots x_m$ is in \emph{(left) (Garside) normal form}, if $x_m\ne \id$ and if $x_i = \Delta\wedge(x_ix_{i+1}\cdots x_m)$ for $i=1,2,\ldots,m$.
Equivalently, we can require
\begin{equation}\label{E_NormalForm}
 x_m\ne \id \quad\text{and}\quad \partial x_i \wedge x_{i+1} = \id \text{\, for \,} i=1,2,\ldots,m-1 \;.
\end{equation}

If a word is in normal form then all occurrences of $\Delta$ must be at the
start, hence the normal form of $x$ is of the form $\Delta^k x_1 x_2 \cdots x_l$
where $x_i \in \pSimples$.  We say that
$\inf(x) = k$ is the \emph{infimum} of $x$, $\cl(x) = l$ is the \emph{canonical
length} of $x$, and $\sup(x) = k + l$ is the \emph{supremum} of $x$.

As $M$ satisfies the Ore conditions, it embeds into its quotient group $Q(M)$.
Conjugation by $\Delta$ gives a bijection $\tau \from \Simples \to
\Simples$ and, as $\Simples$ is finite, this implies that some power of $\Delta$ is central.
Hence, for every element $x$ of $Q(M)$ there exists an integer $k$ such that
$\Delta^k x$ lies in $M$, and so we can extend the
normal form to the quotient group $Q(M)$.

Of particular interest to us in \autoref{S:NF} will be the maps
projecting onto the $i$-th non-$\Delta$ factor from the left, respectively from the right, of the
normal form.
If $\Delta^k x_1 x_2 \cdots x_l$ is in normal form, we define
\begin{align*}
  \lambda_i(x) &= \begin{cases}
    x_i     & \text{ for $i = 1, 2, \ldots, l$} \\
    \id     & \text{ otherwise}
  \end{cases} & \text{and }\,
  \rho_i(x) &= \begin{cases}
    x_{l+1-i} & \text{ for $i = 1, 2, \ldots, l$} \\
    \id       & \text{ otherwise}
  \end{cases} \;\;.
\end{align*}

\subsubsection*{Classical Garside structure for the braid group}
The braid monoid $B_n^+$ defined by the presentation \eqref{E:BraidMonoid} is a Garside monoid of spherical type whose quotient group is the braid group $B_n$ on $n$ strands.
It was in the context of the braid monoid that the left normal form was first used by Garside\cite{Garside} to solve the word and conjugacy problems in the braid group.
The monoid $B_n^+$ is also referred to as the \emph{classical Garside monoid} for $B_n$.
The atoms of $B_n^+$ are the generators $\s_1,\s_2,\ldots,\s_{n-1}$, and the Garside element $\Delta$ of $B_n^+$ is the so-called \emph{half-twist}, the positive braid in which any two strands cross exactly once.
The simple braids are exactly those positive braids in which any two strands cross at most once.  In particular, a simple braid is characterised by the permutation which it induces on the strands, whence the set $\Simples$ is in bijection to the symmetric group $S_n$.

Given $x\in B_n^+$, we define the \emph{starting set} of $x$ as $S(x)=\{a\in \Atoms : a\preccurlyeq x\}$ and the \emph{finishing set} of $x$ as $F(x)=\{a\in \Atoms : x\succcurlyeq a\}$.

We remark that, as an element $x\in B_n^+$ is simple if and only if it is square-free, that is, if and only if it cannot be written as $x=ua^2v$ with $u,v\in B_n^+$ and $a\in\Atoms$, the conditions characterising normal forms can be expressed in terms of starting and finishing sets:
For any $x\in \Simples$, we have $F(x)\cap S(\partial x)=\emptyset$ and $F(x)\cup S(\partial x)=\Atoms$~\cite[Lemma 4.2]{Charney1993}.  For $u,v\in \Simples$, one therefore has $\partial u\wedge v=\id$ if and only if $S(v)\subseteq \Atoms\setminus S(\partial u)= F(u)$.

\section{Normal form}\label{S:NF}%

In this section we will investigate the normal form of random
elements.  For our experiments, we constructed and analysed samples
of 9999 elements of $B_n^+$ for each combination of
number of strands
$n\in\{ 5, 10, 15, 20, 25, 30\}$
and word length
$k\in\{4, 8, 12, 16, 24, 32, 48, 64, 96, 128, 192, 256, 512, 1024, 2048\}$
for both uniformly random words and uniformly random braids.
For uniformly random words we also analysed samples with a word length of 4096.
The samples of uniformly random braids were constructed using an implementation of the algorithm described in~\cite{URB} by the first author; the rest of the computations were done using a development version of \Magma~\cite{magma} V2.19.

Using these samples we will investigate the distribution of simple
factors along the normal form of the elements, that is, we will look at
how the induced probability measures $\lambda_{i*}(\Word_k)$,
$\lambda_{i*}(\URB_k)$, $\rho_{i*}(\Word_k)$ and $\rho_{i*}(\URB_k)$
on the set of simple elements vary with $i$.  This will lead us to
investigate how the normal form changes when an element is multiplied by
an atom.

\subsection{Stable region}\label{SS:NF:StableRegion}%

The fact that there are a large number of simple elements makes it
impractical to look directly at the distribution at each position of
the normal form.  So, we will use several invariants instead, namely the word
length and the starting and finishing sets, to indirectly probe these
distributions.

\subsubsection*{Word length}

\begin{figure}[!p]
  \centering
  \subfloat[Uniformly random words] {
    \begin{tikzpicture}[spy using outlines={
          circle,
          magnification=7,
          connect spies
      }]
      \begin{axis}[
          height=0.5\textwidth,
          width=0.96\textwidth,
          xlabel=Factor,
          ylabel=Mean factor length,
          cycle multi list={
%
%
            black,brown,red,blue\nextlist
            solid\nextlist
            mark=none
          },
          legend reversed=true,
        ]
        
        \addplot table [x=pos, y=LEN] {tables/word-len-n10-k2048.tab}
            node[pos=0.6, anchor=south west, black] {$n=10$};
        \addplot table [x=pos, y=LEN] {tables/word-len-n10-k1024.tab};
        \addplot table [x=pos, y=LEN] {tables/word-len-n10-k512.tab};
        \addplot table [x=pos, y=LEN] {tables/word-len-n10-k256.tab};
        
        \addplot table [x=pos, y=LEN] {tables/word-len-n30-k2048.tab}
            node[pos=0.5, anchor=south west, black] {$n=30$};
        \addplot table [x=pos, y=LEN] {tables/word-len-n30-k1024.tab};
        \addplot table [x=pos, y=LEN] {tables/word-len-n30-k512.tab};
        \addplot table [x=pos, y=LEN] {tables/word-len-n30-k256.tab};
        
        \addlegendimage{empty legend}
        \addlegendentry{2048}
        \addlegendentry{1024}
        \addlegendentry{512}
        \addlegendentry{256}
        \addlegendentry{Word length}

        \coordinate (spypoint) at (axis cs:6.3,7.1); 
        \coordinate (magnifyglass) at (axis cs:180,20);
      \end{axis}

      \spy [gray, size=2.5cm] on (spypoint)
                              in node[fill=white] at (magnifyglass);
    \end{tikzpicture}
  }

  \subfloat[Uniformly random braids]{
    \begin{tikzpicture}[spy using outlines={
          circle,
          magnification=7,
          connect spies
      }]
      \begin{axis}[
          height=0.5\textwidth,
          width=0.96\textwidth,
          xlabel=Factor,
          ylabel=Mean factor length,
          cycle multi list={
            black,brown,red,blue\nextlist
            solid\nextlist
            mark=none
          },
          legend reversed=true,
        ]

        \addplot table [x=pos, y=LEN] {tables/URB-len-n10-k2048.tab}
            node[pos=0.7, anchor=south west, black] {$n=10$};
        \addplot table [x=pos, y=LEN] {tables/URB-len-n10-k1024.tab};
        \addplot table [x=pos, y=LEN] {tables/URB-len-n10-k512.tab};
        \addplot table [x=pos, y=LEN] {tables/URB-len-n10-k256.tab};
        
        \addplot table [x=pos, y=LEN] {tables/URB-len-n30-k2048.tab}
            node[pos=0.6, anchor=south west, black] {$n=30$};
        \addplot table [x=pos, y=LEN] {tables/URB-len-n30-k1024.tab};
        \addplot table [x=pos, y=LEN] {tables/URB-len-n30-k512.tab};
        \addplot table [x=pos, y=LEN] {tables/URB-len-n30-k256.tab};
        
        \addlegendimage{empty legend}
        \addlegendentry{2048}
        \addlegendentry{1024}
        \addlegendentry{512}
        \addlegendentry{256}
        \addlegendentry{Word length}
        
        \coordinate (spypoint) at (axis cs:6.4,3.25); 
        \coordinate (magnifyglass) at (axis cs:180,12);
      \end{axis}

      \spy [gray, size=2.5cm] on (spypoint) 
                              in node[fill=white] at (magnifyglass);
    \end{tikzpicture}
  }
  \caption{Mean factor length.}
  \label{Fig:len}
  \bigskip\bigskip
%
    \begin{tikzpicture}
      \begin{axis}[
          height=0.5\textwidth,
          width=0.95\textwidth,
          xlabel=$n$,
          ylabel=Mean factor length,
          xmin=0,
          ymin=0,
          xmax=30,
          ymax=20,
          legend pos= north west
        ]
        \addplot table [x=n, y=URB-PLEN] {tables/stable-length.tab};
        \addlegendentry{uniformly random braids}
        \addplot table [x=n, y=word-PLEN] {tables/stable-length.tab};
        \addlegendentry{uniformly random words}
      \end{axis}
    \end{tikzpicture}
  \caption{Mean factor length inside stable region.}
  \label{Fig:stable-length}
\end{figure}

\autoref{Fig:len} shows how the mean factor length varies along the
word.  We observe that, provided the word is long enough, the word
can be divided into three regions: An initial region where the word
length of the factors is rapidly decreasing; a stable region where the
word length is constant; and a terminal region where it drops to zero.
Moreover, the shape and size of this initial region is independent of
the word length.  The same structure occurs for the samples not shown
here.

The variation in the canonical length within each sample has
``smeared out'' the terminal region, causing it to grow in size as the
word length increases.  If we were to draw right justified plots,
that is, if the $x$-axis was the distance from the end, then you would
see that, like the initial regions, the terminal regions have a
constant size and shape for sufficiently long words.

\begin{figure}[!h]
  \centering
  \subfloat[Uniformly random words] {
    \centering
    \begin{tikzpicture}
      \begin{axis}[
          ycomb,
          height=0.5\textwidth,
          width=0.45\textwidth,
          no markers,
          xlabel=Factor length,
          ylabel=Relative frequency,
          yticklabel style={/pgf/number format/fixed},
          scaled ticks=false
        ]
        \addplot table [x=length, y=freq] {tables/Word-len-stable-n30.tab};
      \end{axis}
    \end{tikzpicture}
  }
  \hfill
  \subfloat[Uniformly random braids]{
    \centering
    \begin{tikzpicture}
      \begin{axis}[
          ycomb,
          height=0.5\textwidth,
          width=0.45\textwidth,
          no markers,
          xlabel=Factor length,
          ylabel=Relative frequency,
          tick label style={/pgf/number format/fixed}
        ]
        \addplot table [x=length, y=freq] {tables/URB-len-stable-n30.tab};
      \end{axis}
    \end{tikzpicture}
  }
  \caption{Distribution of factor lengths in the stable region for $n = 30$.}
  \label{Fig:lengths-stable}
\end{figure}
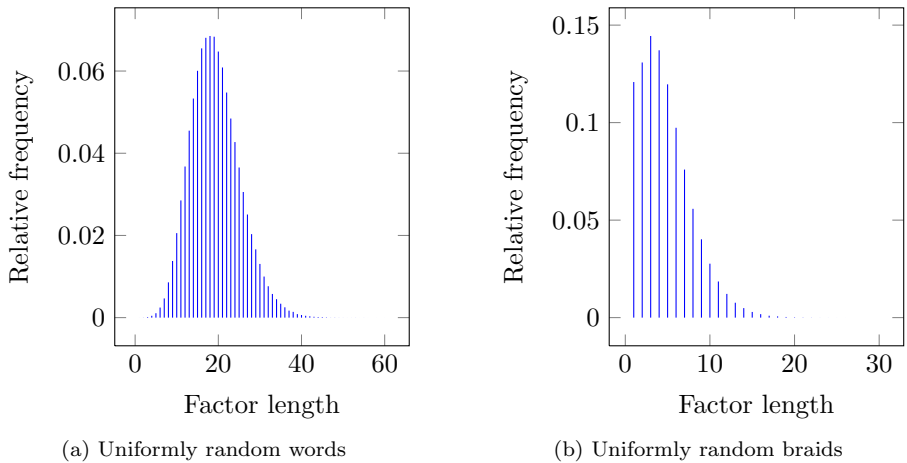

\autoref{Fig:stable-length} shows how the mean factor length inside the stable region depends on~$n$ for both uniformly random words and uniformly random braids, and \autoref{Fig:lengths-stable} shows the distributions of factor lengths in the stable region for $n=30$ for both uniformly random words and uniformly random braids.

For uniformly random words, the observed mean factor lengths are consistent with a linear function in~$n$ (the best fit of a model of the form~$n^c$ is obtained for $c\approx0.9952$), whereas for uniformly random braids the mean factor length grows much more slowly; the best fit of a model of the form~$n^c$ is obtained for $c\approx0.4494$.

The data indicates that normal forms of uniformly random words are much more ``densely packed'' (in the sense that each simple factor on average contains more crossings) than those of uniformly random braids, and that this difference becomes more pronounced with increasing~$n$.  This is consistent with the fact that the distribution of braids obtained by choosing uniformly random words is biassed towards multiples of the Garside elements of standard parabolic subgroups, that is, the lcms of subsets of $\Atoms$~\cite{URB}.  Such lcms have the maximal number of representing words, as they can be rewritten using \emph{all} braid relations between the generators involved.  On the other hand, these lcms also have the maximal possible word length of all simple elements in the standard parabolic subgroups, that is, they yield the densest possible packing of the involved generators into simple factors.

\subsubsection*{Starting and finishing sets}

\begin{figure}[!htbp]
  \centering
  \subfloat[Uniformly random words] {
    \centering
    \begin{tikzpicture}
      \begin{axis}[
          height=0.45\textwidth,
          width=0.45\textwidth,
          no markers,
          xlabel=Distance from start,
          ylabel=Relative frequency,
          xmin=0,
          xmax=50,
          xtick={0,25,50},
          ymin=0,
          ymax=0.75,
        ]
        \addplot table [x=pos, y=gen1] {tables/Word-start-L-n30-k2048.tab};
        \addplot table [x=pos, y=gen8] {tables/Word-start-L-n30-k2048.tab};
        \addplot table [x=pos, y=gen15] {tables/Word-start-L-n30-k2048.tab};
      \end{axis}
    \end{tikzpicture}
    \begin{tikzpicture}
      \begin{axis}[
          height=0.45\textwidth,
          width=0.45\textwidth,
          no markers,
          yticklabel pos=right,
          xlabel=Distance from end,
          xmin=0,
          xmax=50,
          xtick={0,25,50},
          x dir=reverse,
          ymin=0,
          ymax=0.75,
        ]
        \addlegendimage{empty legend}
        \addlegendentry{Generator}
        \addplot table [x=pos, y=gen1] {tables/Word-start-R-n30-k2048.tab};
        \addlegendentry{1}
        \addplot table [x=pos, y=gen8] {tables/Word-start-R-n30-k2048.tab};
        \addlegendentry{8}
        \addplot table [x=pos, y=gen15] {tables/Word-start-R-n30-k2048.tab};
        \addlegendentry{15}
      \end{axis}
    \end{tikzpicture}
  }

  \subfloat[Uniformly random braids]{
    \centering
    \begin{tikzpicture}
      \begin{axis}[
          height=0.45\textwidth,
          width=0.45\textwidth,
          no markers,
          xlabel=Distance from start,
          ylabel=Relative frequency,
          xmin=0,
          xmax=50,
          xtick={0,25,50},
          ymin=0,
          ymax=0.35,
        ]
        \addplot table [x=pos, y=gen1] {tables/URB-start-L-n30-k2048.tab};
        \addplot table [x=pos, y=gen8] {tables/URB-start-L-n30-k2048.tab};
        \addplot table [x=pos, y=gen15] {tables/URB-start-L-n30-k2048.tab};
      \end{axis}
    \end{tikzpicture}
    \begin{tikzpicture}
      \begin{axis}[
          height=0.45\textwidth,
          width=0.45\textwidth,
          no markers,
          yticklabel pos=right,
          xlabel=Distance from end,
          xmin=0,
          xmax=50,
          xtick={0,25,50},
          x dir=reverse,
          ymin=0,
          ymax=0.35,
        ]
        \addlegendimage{empty legend}
        \addlegendentry{Generator}
        \addplot table [x=pos, y=gen1] {tables/URB-start-R-n30-k2048.tab};
        \addlegendentry{1}
        \addplot table [x=pos, y=gen8] {tables/URB-start-R-n30-k2048.tab};
        \addlegendentry{8}
        \addplot table [x=pos, y=gen15] {tables/URB-start-R-n30-k2048.tab};
        \addlegendentry{15}
      \end{axis}
    \end{tikzpicture}
  }
  \caption{Relative frequency of a generator being in the starting
    set for $n = 30$ and word length $= 2048$.}
  \label{Fig:starting}
  \bigskip\bigskip
  \subfloat[Uniformly random words] {
    \centering
    \begin{tikzpicture}
      \begin{axis}[
         ycomb,
          height=0.5\textwidth,
          width=0.45\textwidth,
          no markers,
          xlabel=Generator,
          ylabel=Relative frequency,
          xmin=0,
          xmax=30,
          ymin=0,
          ymax=0.5,
        ]
        \addplot table [x=gen, y=fin] {tables/Word-start-stable-n30.tab};
        \addplot table [x=gen, y=start] {tables/Word-start-stable-n30.tab};
      \end{axis}
    \end{tikzpicture}
  }\hfill
  \subfloat[Uniformly random braids]{
    \centering
    \begin{tikzpicture}
      \begin{axis}[
         ycomb,
         legend image code/.code={
            \draw (0,0) -- (0.1,0);
          },
          height=0.5\textwidth,
          width=0.45\textwidth,
          no markers,
          yticklabel pos=left,
          xlabel=Generator,
          ylabel=Relative frequency,
          xmin=0,
          xmax=30,
          ymin=0,
          ymax=0.5,
        ]
        \addplot table [x=gen, y=fin] {tables/URB-start-stable-n30.tab};
        \addlegendentry{Finishing set}
        \addplot table [x=gen, y=start] {tables/URB-start-stable-n30.tab};
        \addlegendentry{Starting set}
      \end{axis}
    \end{tikzpicture}
  }
  \caption{Relative frequency of a generator being in the starting
    and finishing set in the stable region for $n = 30$.}
  \label{Fig:starting-stable}
\end{figure}

\autoref{Fig:starting} shows, for a given generator, the relative
frequency with which that generator lies in the starting set for each
canonical factor.  To avoid the problem with the variation in
canonical length smearing out the end of the words we have drawn a
left justified plot for the beginning and a right justified plot for
the end of the word.  The plots not shown here for different values of~$n$,
different word lengths, different generators and for the
finishing sets all have a similar shape.  As we saw for the mean
factor length, there is an initial region, a stable region and a
terminal region.  In \autoref{Fig:starting} for uniformly random
braids and generator~$\sigma_{15}$ there is a local minimum around the 10th
factor.  Nevertheless, the size and shape of the initial, and
terminal, regions remains fixed once the word length is sufficiently
long.  Furthermore, the sizes of these regions are consistent with the
sizes observed for the mean factor length.

One clear difference between uniformly random words and
uniformly random braids lies in the frequency with which individual generators occur in the starting and
finishing sets; see \autoref{Fig:starting-stable}.
For uniformly random words the frequency is mostly independent of the generator, except for generators ``at the edges of the braid'' (that is, generators $\sigma_i$ with $i$ close to $1$ or $n-1$) which occur more frequently.
For uniformly random braids, on the other hand, the frequency varies very strongly between generators, with generators at the edges ($\sigma_1$ and $\sigma_{n-1}$) occurring very rarely and the frequency continuously increasing towards the middle and generators $\sigma_i$ with $i\approx\frac{n}2$ occurring most frequently.

\subsubsection*{Combining mean word length and starting\,/\,finishing set frequencies}

Given a sample of random braids, we consider the mean factor length and the relative frequency of each generator being in the starting set, respectively in the finishing set, as functions $f$ of the position $p$ in the non-$\Delta$ factors of the normal form.

For each of these functions $f$, we identify the interval $[p_1,p_2]$ that minimises the ratio $\frac{|f([p_1,p_2])|}{|[p_1,p_2]|}$, where, for $S\subseteq\mathbb R$, we define $|S| = \max(S) - \min(S)$.
(Intuitively, this procedure locates the ``most horizontal part'' of the graph of~$f$.)
We then fit a linear model $\widetilde{f}$ to $f|_{[p_1,p_2]}$ and accept the interval $[p_1,p_2]$ as stable region for~$f$ if $|\widetilde{f}([p_1,p_2])| < 0.1\cdot |f([p_1,p_2])|$; otherwise we consider the stable region for~$f$ as empty.  (Intuitively, this procedure ensures that the trend in the restriction of~$f$ to the interval $[p_1,p_2]$ is small compared to the statistical fluctuations of~$f$ on the interval.)

The stable region of the sample is taken to be the intersection of the stable regions for all the functions.
\medskip

\autoref{Fig:stable-start-k} and \autoref{Fig:stable-start-n} show the start of the stable region, determined as described above, as a function of $n$ and the word length for both uniformly random words and uniformly random braids.
The data shows that, for fixed $n$ and a given method of generating random braids, stable regions exist for sufficiently long random braids, and that their starting positions do not depend on the word length of the random braids.

\begin{figure}[!htbp]
  \centering
  \subfloat[Uniformly random words] {
    \centering
    \begin{tikzpicture}
      \begin{axis}[
          height=0.5\textwidth,
          width=0.96\textwidth,
          xlabel=Word length,
          ylabel=Distance from start,
          xmin=0,
          ymin=0,
          xmax=4096
        ]
        \addplot table [x=k, y=5] {tables/Word-stable-start.tab};
        \addlegendentry{$n = 5$}
        \addplot table [x=k, y=10] {tables/Word-stable-start.tab};
        \addlegendentry{$n = 10$}
        \addplot table [x=k, y=15] {tables/Word-stable-start.tab};
        \addlegendentry{$n = 15$}
        \addplot table [x=k, y=20] {tables/Word-stable-start.tab};
        \addlegendentry{$n = 20$}
        \addplot table [x=k, y=25] {tables/Word-stable-start.tab};
        \addlegendentry{$n = 25$}
        \addplot table [x=k, y=30] {tables/Word-stable-start.tab};
        \addlegendentry{$n = 30$}
      \end{axis}
    \end{tikzpicture}
  }

  \subfloat[Uniformly random braids]{
    \centering
    \begin{tikzpicture}
      \begin{axis}[
          height=0.5\textwidth,
          width=0.96\textwidth,
          xlabel=Word length,
          ylabel=Distance from start,
          xmin=0,
          ymin=0,
          xmax=4096,
        ]
        \addplot table [x=k, y=5] {tables/URB-stable-start.tab};
        \addlegendentry{$n = 5$}
        \addplot table [x=k, y=10] {tables/URB-stable-start.tab};
        \addlegendentry{$n = 10$}
        \addplot table [x=k, y=15] {tables/URB-stable-start.tab};
        \addlegendentry{$n = 15$}
        \addplot table [x=k, y=20] {tables/URB-stable-start.tab};
        \addlegendentry{$n = 20$}
        \addplot table [x=k, y=25] {tables/URB-stable-start.tab};
        \addlegendentry{$n = 25$}
        \addplot table [x=k, y=30] {tables/URB-stable-start.tab};
        \addlegendentry{$n = 30$}
      \end{axis}
    \end{tikzpicture}
  }
  \caption{Start of stable region.}
  \label{Fig:stable-start-k}
  \bigskip\bigskip
%
    \begin{tikzpicture}
      \begin{axis}[
          height=0.5\textwidth,
          width=0.95\textwidth,
          xlabel=$n$,
          ylabel=Mean distance from start,
          xmin=0,
          ymin=0,
          xmax=30,
          ymax=30,
          legend pos= north west
        ]
        \addplot table [x=n, y=URB-pos] {tables/stable-start.tab};
        \addlegendentry{uniformly random braids}
        \addplot table [x=n, y=word-pos] {tables/stable-start.tab};
        \addlegendentry{uniformly random words}
      \end{axis}
    \end{tikzpicture}
  \caption{Start of stable region (averages over all word lengths).}
  \label{Fig:stable-start-n}
\end{figure}

\medskip

Our observations thus lead us to make the following conjecture.

\begin{conjecture}[Stable region]\label{Conj:SR}
  Consider the braid monoid $B_n^+$ for any fixed $n \in \NN$.
  For $\mu_k = \Word_k$, respectively $\mu_k = \URB_k$,
  and for each $i$, the sequences of probability measures
  ${\lambda_i}_*(\mu_k)$ and ${\rho_i}_*(\mu_k)$ on the set of simple
  elements converge as $k \to \infty$.  Moreover, there exists a
  probability measure $\Sigma$ on the set of simple elements and
  constants $C$ and $D$ such that one has
  \begin{align*}
    \forall i > C &\quad {\lambda_i}_*(\mu_k) \to \Sigma \text{ as } k \to \infty \\
    \intertext{and}
    \forall i > D &\quad {\rho_i}_*(\mu_k) \to \Sigma \text{ as } k \to \infty
    \;.
  \end{align*}
\end{conjecture}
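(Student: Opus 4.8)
The plan is to reduce both convergence statements, for $\mu_k=\Word_k$ and for $\mu_k=\URB_k$, to the ergodic theory of a finite-state Markov chain attached to the normal form, using the bounded-propagation estimate of \autoref{SS:NF:BoundedPD} to control the boundary effects. In each case the object of study is the sequence of simple factors $(x_1,\dots,x_\ell)$ subject to the normality conditions \eqref{E_NormalForm}, and the measure $\Sigma$ will be produced as the single-factor marginal of a stationary (``bulk'') measure on such sequences; note that $\Sigma$ will differ between the two methods, consistent with the different packing densities observed in \autoref{SS:NF:StableRegion}.

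I would treat the right factors first, and in the $\Word_k$ case, where the atoms $a_1,a_2,\dots$ are i.i.d.\ uniform on $\Atoms$. Here the normal form of $a_1\cdots a_k$ is obtained from that of $a_1\cdots a_{k-1}$ by the standard right-multiplication procedure, in which the overlap $\partial x_\ell\wedge a_k$ is absorbed into the last factor and a residual carry is propagated from factor to factor towards the left. The bounded-propagation result says this carry travels only a bounded expected distance. Fixing a window width $w$, I would show that the last $w$ factors evolve as a Markov chain on the finite set of admissible length-$w$ normal-form blocks, the transition being determined by the block and the appended atom except on the event that the carry exits the window. After verifying irreducibility and aperiodicity, the Perron--Frobenius theorem gives a unique stationary distribution; projecting onto the $i$-th factor from the right and letting $w\to\infty$ defines $\Sigma$ and yields ${\rho_i}_*(\Word_k)\to\Sigma$ for $i>D$.

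For the left factors I would again use propagation, now to show that ${\lambda_i}_*(\Word_k)$ is Cauchy in $k$: the factor in position $i$ can change at step $k$ only if the carry reaches depth $k-i$, so if the propagation distance has an exponential tail then these change-probabilities are summable in $k$, $\lambda_i(a_1\cdots a_k)$ stabilises almost surely, and the distributions converge. The limit is the law of $\lambda_i$ in the one-sided infinite word; I would identify it with $\Sigma$ for $i>C$ by coupling the stabilised left-to-right factor sequence with the stationary window chain and using the chain's exponential mixing to show that the influence of the boundary at $a_1$ has decayed. The $\URB_k$ case lacks this incremental structure, so I would instead enumerate normal forms by a transfer matrix indexed by simple elements, with an allowed transition from $s$ to $t$ precisely when $\partial s\wedge t=\id$ and a weight $z^{|t|}$ recording length; the uniform measure on $B_n^+|_k$ is then the Gibbs measure on admissible factor paths of total length $k$, and Perron--Frobenius theory for this weighted matrix produces the dominant eigenvalue, the positive left and right eigenvectors, and the associated stationary Markov measure whose one-factor marginal is the corresponding $\Sigma$, with the boundary eigenvectors contributing only exponentially small corrections.

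I expect the main obstacle to be exactly the matching of the two one-sided limits to a single $\Sigma$ together with the uniform-in-$k$ control of the transient lengths $C$ and $D$. Two quantitative inputs are needed and are not supplied by the earlier sections: a genuine exponential tail on the propagation distance (\autoref{SS:NF:BoundedPD} gives only a bounded mean), uniform as the window width $w$ grows; and a spectral-gap estimate for the transfer matrix in the $\URB_k$ case. Even granting these, the conjecture asserts equality with $\Sigma$ for all $i>C$ rather than mere asymptotic convergence in $i$, so the delicate point is to show that the dependence of $\lambda_i$ on the initial boundary vanishes exactly beyond a finite threshold---which would require an exact regeneration or coupling-from-the-past structure rather than only geometric decay; establishing this is, I believe, the crux of the argument.
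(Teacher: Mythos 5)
First, a point of order: the paper does not prove this statement --- it is stated as \autoref{Conj:SR}, a \emph{conjecture}, supported only by the experimental data of \autoref{SS:NF:StableRegion} and by the heuristic link to \autoref{Conj:BEPD}. So there is no proof in the paper to compare yours against; your proposal has to stand on its own. Its overall architecture is in fact well aligned with the paper's own heuristics: the authors explicitly motivate \autoref{Conj:BEPD} by observing that bounded penetration would explain the stable region, and the transfer-matrix formalism you invoke for $\URB_k$ is essentially the machinery they build in \autoref{SS:NF:GarsideGroups} (the matrix $M$ and the generating functions for $|\PSeq_k|$ and $|G^+_{(k)}|$). In that sense your plan is a reasonable programme for turning their conjecture into a theorem.

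However, as a proof it has genuine gaps, most of which you flag yourself but which deserve to be stated as obstructions rather than caveats. (1) Your main input is not available: bounded \emph{expected} penetration distance for $B_n^+$ is itself only \autoref{Conj:BEPD}; \autoref{Thm:BoundedPenetrationDistance} proves it only under a growth-rate hypothesis $\alpha<\beta$, for the measure $\nu_k$ uniform on $\{\,x : \cl(x)=k,\ \inf(x)=0\,\}$ --- which is neither $\Word_k$ nor $\URB_k$ --- and the hypothesis is verified only computationally for $n\le 5$ (resp.\ $\BKL_n$, $n\le 6$). You need more still, namely an exponential tail uniform in $k$, which nothing in the paper supplies. (2) The ``window of the last $w$ factors'' process is not actually a Markov chain: when the carry exits the window, the factor re-entering at depth $w$ depends on the unseen prefix, and appending an atom can change the number of factors or create a $\Delta$, shifting the indexing; you would need a regeneration argument, not just ``verify irreducibility and aperiodicity''. (3) Most seriously, the conjecture asserts \emph{exact} equality of $\lim_k{\lambda_i}_*(\mu_k)$ with a single $\Sigma$ for all $i>C$, whereas every mechanism you propose (Perron--Frobenius with a spectral gap, mixing of the window chain) yields only geometric convergence as $i\to\infty$; indeed, under the naive conditioned-path model the marginal at position $i$ is proportional to $(wM^{i-1})_s r_s$, which generically depends on $i$ for every finite $i$. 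Absent an exact coupling or rank-collapse phenomenon, your argument would prove a weaker statement than the one conjectured --- or else the conjecture's ``for all $i>C$'' should be read as an empirical idealisation. You correctly identify this as the crux, but identifying the crux is not the same as resolving it.
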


\subsection{Bounded expected penetration distance}\label{SS:NF:BoundedPD}%

We can view a uniformly random word as the result of a random process adding one
letter, chosen at random, at a time.  The stable region conjecture
suggests that the change to the normal form when multiplying by an
atom is unlikely to penetrate into the stable region.  This leads us
to investigate how the normal form of a random braid changes upon
multiplication by an atom.

The normal form of a word $wa$, where $w$ is in normal form and $a$ is an atom, can be
calculated from the normal form of $w$ by working through the word from the right to the left,
repeatedly applying the rewriting rule $xy \to (xm)(m^{-1}y)$
where $m = \partial x \wedge y$.  If at any point we have $m = \id$ then
we can stop.  If we have $xm = \Delta$ then all the following
rewrites will be of the form $x\Delta \to \Delta \tau(x)$, that is, consist of an application of the Garside automorphism; we will consider this a trivial change.

\begin{definition}
  For two braids $x$ and $y$ the penetration distance $\pd(x,y)$ for the product
  $xy$ is the number of simple factors at the end of the normal form
  of~$x$ which undergo a non-trivial change in the normal form of the
  product:
  \[
  \pd(x,y) = \cl(x) - \max\big\{ i \in \{0,\ldots,\cl(x)\} :
        x\Delta^{-\inf(x)}  \wedge \Delta^i
        = xy\Delta^{-\inf(xy)} \wedge \Delta^i \big\} \;.
  \]
\end{definition}

Using the same samples of uniformly random words and uniformly random braids as before, we
took each braid and calculated the penetration distances for its product with each
generator.  The mean penetration distance for each sample is
shown in \autoref{Fig:mean-pd}.  There are clear patterns here: the
mean penetration distance converges as the word length increases; the
value of the mean penetration distance increases with $n$; and it is
significantly larger for uniformly random braids than it is for uniformly random words.

\begin{figure}[!htbp]
  \centering
  \subfloat[Uniformly random words] {
    \centering
    \begin{tikzpicture}
      \begin{axis}[
          height=0.5\textwidth,
          width=0.96\textwidth,
          xlabel=Word length,
          ylabel=Mean $\pd$,
          xmin=0,
          ymin=0,
          xmax=4096,
        ]
        \addplot table [x=k, y=5] {tables/word-mean-pd.tab};
        \addlegendentry{$n = 5$}
        \addplot table [x=k, y=10] {tables/word-mean-pd.tab};
        \addlegendentry{$n = 10$}
        \addplot table [x=k, y=15] {tables/word-mean-pd.tab};
        \addlegendentry{$n = 15$}
        \addplot table [x=k, y=20] {tables/word-mean-pd.tab};
        \addlegendentry{$n = 20$}
        \addplot table [x=k, y=25] {tables/word-mean-pd.tab};
        \addlegendentry{$n = 25$}
        \addplot table [x=k, y=30] {tables/word-mean-pd.tab};
        \addlegendentry{$n = 30$}
      \end{axis}
    \end{tikzpicture}
  }

  \subfloat[Uniformly random braids]{
    \centering
    \begin{tikzpicture}
      \begin{axis}[
          height=0.5\textwidth,
          width=0.96\textwidth,
          xlabel=Word length,
          ylabel=Mean $\pd$,
          xmin=0,
          ymin=0,
          xmax=4096,
        ]
        \addplot table [x=k, y=5] {tables/URB-mean-pd.tab};
        \addlegendentry{$n = 5$}
        \addplot table [x=k, y=10] {tables/URB-mean-pd.tab};
        \addlegendentry{$n = 10$}
        \addplot table [x=k, y=15] {tables/URB-mean-pd.tab};
        \addlegendentry{$n = 15$}
        \addplot table [x=k, y=20] {tables/URB-mean-pd.tab};
        \addlegendentry{$n = 20$}
        \addplot table [x=k, y=25] {tables/URB-mean-pd.tab};
        \addlegendentry{$n = 25$}
        \addplot table [x=k, y=30] {tables/URB-mean-pd.tab};
        \addlegendentry{$n = 30$}
      \end{axis}
    \end{tikzpicture}
  }
  \caption{Mean penetration distance.}
  \label{Fig:mean-pd}

  \vskip4.13ex
  \subfloat[Uniformly random words]{
    \begin{tikzpicture}
      \begin{axis}[
          ycomb,
          no markers,
          height=0.45\textwidth,
          width=0.45\textwidth,
          xlabel=Generator,
          ylabel=Mean $\pd$,
          xmin=0,
          xmax=30,
          ymin=0,
        ]
        \addplot table [x=gen, y=Word] {tables/gen-mean-pd.tab};
      \end{axis}
    \end{tikzpicture}
  }
  \hfill
  \subfloat[Uniformly random braids]{
    \begin{tikzpicture}
      \begin{axis}[
          ycomb,
          no markers,
          height=0.45\textwidth,
          width=0.45\textwidth,
          xlabel=Generator,
          ylabel=Mean $\pd$,
          xmin=0,
          xmax=30,
          ymin=0,
        ]
        \addplot table [x=gen, y=URB] {tables/gen-mean-pd.tab};
      \end{axis}
    \end{tikzpicture}
  }
  \caption{Mean penetration distance for each generator for $n=30$ and word length $=2048$.}
  \label{Fig:gen-mean-pd}
\end{figure}

\begin{conjecture}[Uniformly bounded expected penetration distance]\label{Conj:BEPD}\hspace{0pt plus1em minus0pt}
  Consider the braid monoid $B_n^+$ for fixed $n \in \NN$, let $\mu_\Atoms$ be the uniform probability
  measure on the set of atoms and, for $k\in\NN$, let $\mu_k \in \{\Word_k, \URB_k\}$.
  Then there exists $C$ such that for all $k\in\NN$, we have
  \[ \mathbf{E}_{\mu_k \times \mu_\Atoms}[\pd] < C \;. \]
\end{conjecture}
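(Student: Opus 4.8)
The plan is to prove the stronger statement that the tail probabilities decay geometrically, uniformly in $k$: there are constants $C_0$ and $\rho\in(0,1)$, depending only on $n$ and on the method, with $\Pr_{\mu_k\times\mu_\Atoms}[\pd\ge j]\le C_0\rho^{\,j}$ for all $j$ and all $k$. Since $\mathbf E_{\mu_k\times\mu_\Atoms}[\pd]=\sum_{j\ge 1}\Pr[\pd\ge j]$, this yields $\mathbf E[\pd]\le C_0/(1-\rho)=:C$, uniformly in $k$. The first step is to make the propagation explicit. Writing the non-$\Delta$ part of $x$ as $x_1\cdots x_l$ and appending an atom $a$, the rewriting rule described above produces a left-propagating sequence of carries $u_l,u_{l-1},\dots$ with $u_l=\partial x_l\wedge a$ and $u_i=\partial x_i\wedge(x_{i+1}u_{i+1})$, the $i$-th factor being replaced by $x_iu_i$; penetration continues past position $i$ precisely while $u_i\ne\id$, equivalently while $S(x_{i+1}u_{i+1})\not\subseteq F(x_i)$, and the change becomes trivial (hence stops contributing to $\pd$) as soon as some $x_iu_i=\Delta$. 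Thus $\{\pd\ge j\}$ depends only on the last $j$ factors of $x$ together with $a$, and since $(x_i,x_{i+1})$ is left-weighted, propagation past a factor forces the strict enlargement $S(x_{i+1}u_{i+1})\supsetneq S(x_{i+1})$.

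Next I would model the trailing factors by a transfer operator. Normal forms are exactly the accepted paths of the finite automaton whose states are the simple elements $\Div(\Delta)\setminus\{\id\}$ and whose edges are the left-weighted pairs $(s,t)$. For $\URB_k$ the relevant object is the nonnegative transfer matrix $M(q)$ with $M(q)_{s,t}=q^{|t|}$ when $(s,t)$ is left-weighted and $0$ otherwise, the number of length-$k$ braids being the coefficient of $q^k$; for $\Word_k$ one instead needs a transfer operator counting the words of length $k$ representing each normal form. In both cases the underlying nonnegative matrix is primitive, so Perron--Frobenius gives a simple leading eigenvalue $\Lambda>0$ and a spectral gap. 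The consequence I want is a decorrelation statement: there is $\theta\in(0,1)$ such that, uniformly in $k$, the law under $\mu_k$ of the last $m$ factors $(\rho_1(x),\dots,\rho_m(x))$ agrees, up to total-variation error $O(\theta^{\,k})$, with a fixed Markov chain built from the right Perron eigenvector. For large $k$ the trailing factors then form, to exponentially good approximation, a stationary exponentially mixing Markov chain with constants independent of $k$.

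The third step supplies the per-step stopping probability. I would enlarge the chain to track the pair (current factor, incoming carry) and prove a reachability lemma: from every reachable state there is a positive-probability continuation, of length bounded in terms of $n$ only, that absorbs the carry, i.e.\ reaches a configuration with $u_i=\id$. Two mechanisms prevent an absorbing-free closed class. A carry propagates only by strictly enlarging a starting set, and a starting set is a subset of the $n-1$ atoms; so after boundedly many enlargements it either stabilises inside the next finishing set (absorption) or saturates, forcing some $x_iu_i=\Delta$, which is exactly the trivial change that halts the contribution to $\pd$. Finiteness of the state space together with reachability of an absorbing event from every state yields a uniform $\delta>0$ with per-step absorption probability at least $\delta$, whence $\Pr[\text{carry survives }j\text{ factors}]\le(1-\delta)^{\,j}$. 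Combined with the decorrelation estimate this gives $\Pr[\pd\ge j]\le C_0\rho^{\,j}$ with, say, $\rho=\max(\theta^{1/2},(1-\delta)^{1/2})<1$, uniformly in $k$.

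The main obstacle is the interface between these two probabilistic inputs, and it splits along the two methods. For $\URB_k$ the transfer matrix is the classical regular language of Garside normal forms, so primitivity and the spectral gap are routine, and the real content is the reachability lemma: one must genuinely rule out carries that enlarge starting sets indefinitely, and the bound by $|\Atoms|=n-1$ together with the $\Delta$-saturation mechanism is the crux. For $\Word_k$ the extra difficulty is that the count of representing words does not factor over the simple factors, so one must build a (larger) primitive graded transfer operator that still has a spectral gap and whose induced trailing-factor chain retains positive stopping probability; I expect the same $\Delta$-saturation argument to apply, but verifying primitivity and the uniform decorrelation for this word-counting operator, rather than the braid-counting one, is the step most likely to require care.
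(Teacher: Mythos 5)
You should first note that the statement you are proving is \autoref{Conj:BEPD}, which the paper explicitly leaves as a \emph{conjecture}: it offers only experimental evidence (\autoref{Fig:mean-pd}) and no proof, so there is nothing to compare your argument against directly. The closest result in the paper is \autoref{Thm:BoundedPenetrationDistance}, which concerns a different measure --- the uniform measure $\nu_k$ on $G^+_{(k)}$, i.e.\ on braids of \emph{canonical} length $k$ with infimum $0$, not $\Word_k$ or $\URB_k$ --- and even that theorem is conditional on the growth-rate inequality $\alpha<\beta$ between $|\PSeq_k|$ and $|G^+_{(k)}|$, which the paper verifies only computationally for a handful of small groups rather than proving for all $n$. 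Any complete proof of the conjecture would be a genuinely new contribution, so the bar for your sketch is whether each step could actually be carried out.

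It cannot, as written, and the break is in your third step. The strict enlargement you invoke, $S(x_{i+1}u_{i+1})\supsetneq S(x_{i+1})$, compares the old and new factors \emph{at the same position} $i+1$; as the carry moves left you pass to a different position with its own, unrelated starting set, so there is no monotone quantity bounded by $|\Atoms|=n-1$ that increases along the propagation. Consequently ``after boundedly many enlargements it saturates, forcing some $x_iu_i=\Delta$'' is false --- and must be, since the paper records an observed penetration distance of $518$ for $n=30$, far exceeding $n-1$. This removes the entire basis for your reachability lemma and hence for the uniform per-step absorption probability $\delta$. Even if reachability were repaired, two further gaps remain: (i) the bound $\Pr[\text{carry survives }j\text{ factors}]\le(1-\delta)^j$ requires the absorption estimate to hold for the law of the preceding factors \emph{conditioned on survival so far}, which does not follow from reachability in the unconditioned chain and is precisely where a counting argument (as in the paper's \eqref{EQ:BPD1}, bounding $|X_{i,k}|$ by $|G^+_{(k-i)}|\cdot|\PSeq_i|$) would have to replace the Markov heuristic; and (ii) for $\Word_k$ you concede that the word-counting transfer operator is not constructed, and its primitivity and spectral gap are asserted rather than established. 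In short, your outline reduces the conjecture to exactly the statements (a spectral gap plus a uniform stopping probability, or equivalently an $\alpha<\beta$-type comparison valid for all $n$ and for the right measures) that constitute the open problem, and supplies for them an argument whose key mechanism is incorrect.
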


\begin{corollary}
  There exists an algorithm to compute the normal form of a braid that
  has linear expected running time.
\end{corollary}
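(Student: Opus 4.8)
The plan is to exhibit one deterministic incremental algorithm and to charge its running time, step by step, to the penetration distance, so that \autoref{Conj:BEPD} (for $\mu_k=\Word_k$) turns the expected total cost into a linear function of the input length. Given an input word $a_1\cdots a_k$ with $a_j\in\Atoms$, I would process it from left to right, maintaining the normal form $\Delta^{p}x_1\cdots x_l$ of the prefix read so far and, at each step, multiplying this normal form on the right by the next atom using exactly the right-to-left rewriting procedure recalled before the definition of $\pd$. Since $n$ is fixed, $\Div(\Delta)$ is finite, so every elementary operation on simple factors (an application of $\partial$, a $\preccurlyeq$-gcd, a product, an application of $\tau$) is a table lookup costing $O(1)$.

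The first step is to verify that, apart from the treatment of $\Delta$, one multiplication by an atom costs $O(\pd+1)$ operations: the sweep performs one non-trivial rewrite per factor that is actually modified, and by the definition of $\pd$ there are exactly $\pd$ of these, plus $O(1)$ extra work either to detect the terminating condition $m=\id$ or to create a single new $\Delta$ (recall that multiplying by an atom raises the infimum by at most one, so at most one $\Delta$ is produced). Summing over the $k$ steps, the total number of operations is $O\!\big(k+\sum_{j}\pd_j\big)$, where $\pd_j$ is the penetration distance incurred at step $j$.

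The main obstacle is that a $\Delta$ created during a sweep is produced at the \emph{left} end of the modified block, hence near the right end of a possibly long normal form, yet it must migrate to the front through the rewrites $x\Delta\to\Delta\tau(x)$; done naively this twists the entire prefix, costing $\Theta(l)$ per step and $\Theta(k^2)$ overall. I would remove this cost by twisting \emph{lazily}: store the factor list together with a single counter $g$ taken modulo the order of $\tau$ (which is finite since some power of $\Delta$ is central; for braids $\tau^2=\id$, so $g\in\{0,1\}$), interpreting the stored $x_i$ as $\tau^{g}(x_i)$. Absorbing a $\Delta$ created at position $i$ then amounts to incrementing $p$, incrementing $g$ (which twists all factors at once), and applying $\tau^{-1}$ only to the factors lying to the right of position $i$ in order to cancel the unwanted extra twist on them; since these factors are precisely the ones in the block just modified, there are at most $\pd$ of them, so the correction is again $O(\pd)$. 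Reading a factor during the next sweep costs $O(1)$ (one application of $\tau^{g}$), and a single $O(k)$ pass at the end applies the outstanding twist to output the final normal form. Thus the per-step cost is genuinely $O(\pd_j+1)$ and the total cost is $O\!\big(k+\sum_j\pd_j\big)$.

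It remains to take expectations over the random input. Under $\Word_k$ the prefix processed before step $j$ is distributed as $\Word_j$ and the atom multiplied in at that step is an independent $\mu_\Atoms$-distributed atom, so $\mathbf{E}[\pd_j]=\mathbf{E}_{\Word_{j}\times\mu_\Atoms}[\pd]<C$ by \autoref{Conj:BEPD}. Therefore $\sum_{j}\mathbf{E}[\pd_j]<Ck$, and the expected running time of the algorithm on a uniformly random word of length $k$ is $O(k)$, that is, linear in the length of the input.
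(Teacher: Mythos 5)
Your proposal is correct and takes essentially the same route as the paper: an incremental left-to-right algorithm whose per-step cost is charged to the penetration distance, made amortised-constant by deferring the $\tau$-twists arising from created $\Delta$s modulo the central power of $\Delta$ (the paper keeps a per-position counter $l_i\in\ZZ/c\ZZ$ pushed along during the sweep and flushed in a final pass, whereas you keep a single global counter with an $O(\pd)$ local correction --- the same device, organised differently), and then bounded in expectation by \autoref{Conj:BEPD}. No substantive gap.
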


\begin{proof}
  Consider \autoref{NF:Alg} for computing the left normal form of a
  word.  The first loop is similar to the usual
  algorithm~\cite[Ch.\,9]{braid_epsteinetal} except that: we increment
  a counter~$I$ each time a~$\Delta$ is created, this will be the
  infimum of the normal form; we add a power of~$\Delta$ before each simple
  element, these are stored as exponents~$l_m$ modulo~$c$, where~$\Delta^c$ is the central power of~$\Delta$;
  and the inner loop stops as soon as a new~$\Delta$ is
  created.  We then add an additional pass, working backwards along the
  word, to push all the~$\Delta$s to the front.

  This algorithm has an invariant: the equality
  $x = \Delta^{I - \sum\limits_m \widetilde{l}_m} \Delta^{\widetilde{l}_1} s_1 \Delta^{\widetilde{l}_2} s_2
          \cdots \Delta^{\widetilde{l}_r} s_r$ in $B_n$,
  where the $\widetilde{l}_m\in\ZZ$ is a representative of $l_m$ for $m=1,2,\ldots,r$, remains true after each line has been completed.  Moreover, $l_m=0$ holds for $m=i,\ldots,r$ at any time.

  As the~$l_m$ are elements of $\ZZ/c\ZZ$, the operation of
  ``pushing'' the occurrences of~$\Delta$ over a simple element in lines \ref{NF:Alg:push1},
  \ref{NF:Alg:push2} and \ref{NF:Alg:push3} has bounded running time.

  The inner loop in line~\ref{NF:Alg:inner-loop} pushes the change triggered by multiplication by some~$x_l$ through the normal form of $x_1 x_2 \cdots x_{l-1}$, so the
  body of the loop will be executed $\pd(x_1 x_2 \cdots x_{l-1},
  x_l)$ times.  Hence, by \autoref{Conj:BEPD}, this loop is
  expected to take a constant amount of time.
  The two outer loops
  have at most~$k$ iterations with each iteration taking constant
  expected time, hence the whole algorithm has linear expected running
  time.
\end{proof}

\begin{algorithm}
  \caption{Calculate the normal form of a word $x = x_1 x_2 \cdots x_k$}
  \label{NF:Alg}
  \begin{algorithmic}[1]
    \REQUIRE $x_1, x_2, \ldots, x_k$ where each $x_i \in \Atoms$
    \ENSURE The left normal form $\Delta^I s_1 s_2 \cdots s_r$ of $x_1 x_2 \cdots x_k$
    \smallskip

    \STATE $l_1, l_2, \ldots, l_k \leftarrow 0 \in \ZZ/c\ZZ$; \quad
           $I \leftarrow 0 \in \ZZ$; \quad
           $s_j \leftarrow x_j$ for $j=1,2,\ldots,k$; \\
           $i \leftarrow 1$; \quad $r \leftarrow k$

    \WHILE{$i < r$}
      \IF{$\partial s_i \wedge s_{i+1} \ne \id$}
        \STATE $m \leftarrow \partial s_i \wedge s_{i+1}$; \quad
               $s_i \leftarrow s_i m$; \quad
               $s_{i+1} \leftarrow m^{-1} s_{i+1}$

        \STATE $j \leftarrow i$

        \IF{$j > 1$}
          \STATE $s_{j-1} \leftarrow \tau^{l_j}(s_{j-1})$; \quad
                 $l_{j-1} \leftarrow l_{j-1} + l_j$; \quad
                 $l_j \leftarrow 0$
                 \label{NF:Alg:push1}
        \ENDIF
        \WHILE{$s_j \ne \Delta$\label{NF:Alg:inner-loop}
               \AND $j > 1$
               \AND $\partial s_{j-1} \wedge s_j \ne \id$}
          \STATE $m \leftarrow \partial s_{j-1} \wedge s_j$; \quad
                 $s_{j-1} \leftarrow s_{j-1} m$; \quad
                 $s_j \leftarrow m^{-1} s_j$

          \STATE $j \leftarrow j-1$

          \IF{$j > 1$}
            \STATE $s_{j-1} \leftarrow \tau^{l_j}(s_{j-1})$; \quad
                   $l_{j-1} \leftarrow l_{j-1} + l_j$; \quad
                   $l_j \leftarrow 0$
                   \label{NF:Alg:push2}
          \ENDIF
        \ENDWHILE

        \IF{$s_j = \Delta$}
          \STATE $l_{j+1} \leftarrow l_j + l_{j+1} + 1$; \quad
                 $I \leftarrow I + 1$; \\
                 Delete $s_j$ and $l_j$, moving the following terms forward and decreasing $r$ by~1.
          \IF{$i > 1$}
            \STATE $i \leftarrow i - 1$
          \ENDIF
        \ENDIF

        \IF{$s_{i+1} = \id$}
          \STATE Delete $s_{i+1}$ and $l_{i+1}$, moving the following terms forward and decreasing $r$ by~1.
        \ELSE
          \STATE $i \leftarrow i+1$
        \ENDIF

      \ELSE
        \STATE $i \leftarrow i+1$
      \ENDIF
    \ENDWHILE

    \FOR{$j=r$ \TO $2$}
      \STATE $s_{j-1} \leftarrow \tau^{l_j}(s_{j-1})$; \quad
             $l_{j-1} \leftarrow l_{j-1} + l_j$; \quad
             $l_j \leftarrow 0$
             \label{NF:Alg:push3}
    \ENDFOR
  \end{algorithmic}
\end{algorithm}

\autoref{Fig:gen-mean-pd} shows the mean penetration distance of each
generator for $n=30$ and a word length of 2048.  A similar shape can
be seen for the other values of~$n$.  We see that not only is the mean
penetration distance longer for uniformly random braids, but also the
ratio of longest to shortest is significantly larger:  For uniformly random
words the ratio is less than~2, but for uniformly random braids it is greater
than~10.

\autoref{Fig:pd-dist} shows the distribution of penetration distances
observed in our sample for $n=30$ and a word length of 2048.
To understand the spike at $\pd=1$, consider a braid $x=s_1s_2\cdots s_k$ in normal form.
Multiplication of~$x$ on the right by a generator~$\sigma_i$ will modify the last factor $s_k$ if and only if $\sigma_i\notin F(s_k)$.
In contrast, in order for a modification of $s_j$ by a generator $\sigma_i\notin F(s_j)$ to trigger a modification of $s_{j-1}$, \emph{two} conditions must be met:
Firstly, one must have $S(s_j \sigma_i) \supsetneq S(s_j)$, that is, a new generator, say $\sigma_m$, must appear in the starting set $S(s_j \sigma_i)$.  (It can be shown that this happens if and only if the permutation~$\pi$ describing $s_k$ satisfies $\pi^{-1}(i+1) = \pi^{-1}(i)+1 = m+1$.)
Secondly, the new generator must be able to modify~$s_{j-1}$, that is, $\sigma_m\notin F(s_{j-1})$ must hold; this is the analogue of the condition for the case of the last factor~$s_k$.
The former condition, however, is absent in the case of the last factor~$s_k$, whence the counts for~$\pd = 1$ are higher, and those for $\pd=0$ lower, than what would be expected for a ``smooth'' distribution.

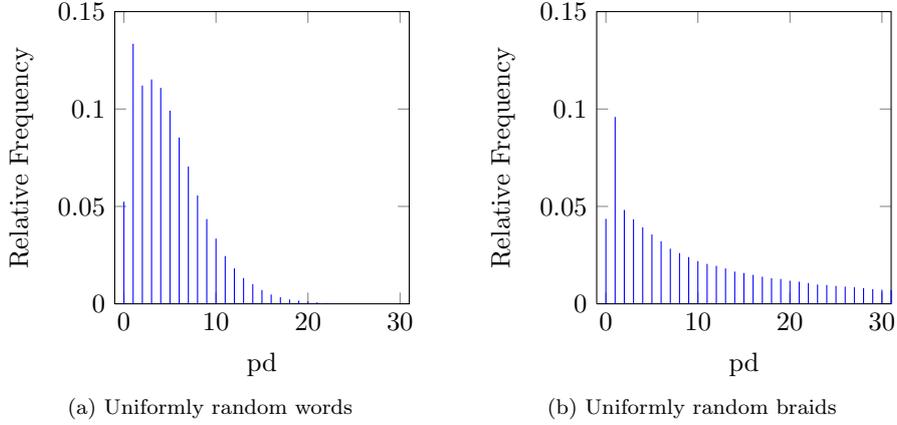
\begin{figure}[!htbp]
  \centering
  \subfloat[Uniformly random words]{
    \begin{tikzpicture}
      \begin{axis}[
          ycomb,
          no markers,
          height=0.45\textwidth,
          width=0.45\textwidth,
          xlabel=$\pd$,
          ylabel=Relative Frequency,
          xmin=-1,
          xmax=31,
          ymin=0,
          ymax=0.15,
          every y tick label/.append style  =
          { 
            /pgf/number format/.cd,
            precision = 2, 
            fixed
          }
        ]
        \addplot table [x=pd, y=freq] {tables/word-pd-dist.tab};
      \end{axis}
    \end{tikzpicture}
  }
  \hfill
  \subfloat[Uniformly random braids]{
    \begin{tikzpicture}
      \begin{axis}[
          ycomb,
          no markers,
          height=0.45\textwidth,
          width=0.45\textwidth,
          xlabel=$\pd$,
          ylabel=Relative Frequency,
          xmin=-1,
          xmax=31,
          ymin=0,
          ymax=0.15,
          every y tick label/.append style  =
          { 
            /pgf/number format/.cd,
            precision = 2, 
            fixed
          }
        ]
        \addplot table [x=pd, y=freq] {tables/URB-pd-dist.tab};
      \end{axis}
    \end{tikzpicture}
  }
  \caption{Distribution of penetration distances for $n=30$ and word length $=2048$.}
  \label{Fig:pd-dist}
\end{figure}

\section{Garside monoids}\label{S:GarsideGroups}%

Clearly the stable region conjecture and the bounded expected
penetration distance conjecture make sense in any Garside monoid and
for different sequences of probability measures.  We will give an
example of a Garside monoid $T_1$ where the
penetration distance is bounded, in other words there exists a
constant $C$ such that for any element $x$ and any atom $a$ we have
$\pd(x,a) < C$.  This stronger condition implies that both the bounded
penetration distance conjecture and the stable region conjecture hold
for $T_1$.  We will then go on to give a method to establish whether a variant
of the bounded penetration distance conjecture holds within a given
Garside monoid.

\subsection{A small Garside monoid}\label{SS:ToyGarsideGroup}%

\newcommand{\A}{A}
\newcommand{\B}{B}
\newcommand{\AB}{A\!B}
\newcommand{\BA}{B\!A}
\newcommand{\BB}{B\!B}
\newcommand{\BAB}{B\!A\!B}
\newcommand{\BBB}{B\!B\!B}

Let $T_1$ be the Garside monoid given by the presentation
\[ T_1 = \left\langle \A, \B \mid \A\,\B \A = \B \B \right\rangle^+ \;. \]
Its quotient group is isomorphic to the braid group on three
strands, but this monoid is distinct from both the classical and the dual Garside monoids~\cite{Dehornoy02}.  The
Garside element is $\BBB$ and there are eight simple elements.
\autoref{Fig:HasseDiagram} shows the structure of the prefix partial order on
the simple elements. \autoref{Fig:Transition} shows the matrix where
the entry~$(x,y)$ is~$1$ if $\partial x \wedge y = \id$ and~$0$
otherwise.  From this matrix one can easily read off which pairs of
simple elements are in left normal form.

\begin{figure}[!htbp]
  \centering
  \hfill
  \begin{minipage}{0.35\textwidth}%
    \centering
    \parbox[c]{\textwidth}{
      \centering
      \begin{xy}
        0;<4em,0em>:<0em,4em>::
        (1,0)*+{\id}="e";
        (0,1)*+{\A}="a";
        (2,1)*+{\B}="b";
        (0,2)*+{\AB}="ab";
        (2,2)*+{\BA}="ba";
        (0,3)*+{\BB}="bb";
        (2,3)*+{\BAB}="bab";
        (1,4)*+{\BBB}="bbb";
        {\ar@{->}^{\A} "e";"a"};
        {\ar@{->}_{\B} "e";"b"};
        {\ar@{->}^{\B} "a";"ab"};
        {\ar@{->}_{\B} "b";"bb"};
        {\ar@{->}_{\A} "b";"ba"};
        {\ar@{->}^{\A} "ab";"bb"};
        {\ar@{->}_{\B} "ba";"bab"};
        {\ar@{->}^{\B} "bb";"bbb"};
        {\ar@{->}_{\A} "bab";"bbb"};
      \end{xy}
    }
    \caption{Hasse diagram for the prefix partial order on the simple elements.} \label{Fig:HasseDiagram}
  \end{minipage}\hfill
  \begin{minipage}{0.55\textwidth}%
    \centering
    \parbox[c]{\textwidth}{
      \centering
      \begin{tabular}{C|CCCCCC}
             & \A & \B & \AB & \BA & \BB & \BAB \\ \hline
        \A   & 1  & 0  & 1   & 0   & 0   & 0    \\
        \B   & 0  & 0  & 0   & 0   & 0   & 0    \\
        \AB  & 0  & 1  & 0   & 1   & 0   & 1    \\
        \BA  & 1  & 0  & 1   & 0   & 0   & 0    \\
        \BB  & 1  & 0  & 1   & 0   & 0   & 0    \\
        \BAB & 0  & 1  & 0   & 1   & 0   & 1        
      \end{tabular}
    }
    \caption{The entry $(a,b)$ is 1 if $\partial a\wedge b=\id$ and 0 otherwise.} \label{Fig:Transition}
  \end{minipage}
  \hfill
\end{figure}

\begin{proposition} \label{Prop:Bounded}
  For any element $x \in T_1$ and any atom $a \in \{\A, \B\}$ we have
  \[ \pd(x,a) < 3 \;. \]
\end{proposition}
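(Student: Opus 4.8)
The plan is to follow the right-to-left cascade of local rewrites that turns the normal form of $x$ into that of $xa$ and to show it disturbs at most two factors. Write the normal form of $x$ as $\Delta^{p}F_{1}\cdots F_{l}$ with each $F_{i}$ a proper simple, and append the atom $a$. Normalising proceeds by repeatedly replacing a junction $(u,v)$ with $(um,m^{-1}v)$, where $m=\partial u\wedge v$, starting at the rightmost junction and moving left, and halting once $m=\id$ or a factor has become $\Delta$ (thereafter only trivial $\tau$-twists occur); $\pd(x,a)$ counts the factors of $x$ disturbed before halting. At the first junction $(F_{l},a)$ we have $m\in\{\id,a\}$ because $a$ is an atom: if $a\notin S(\partial F_{l})$ then $m=\id$ and $\pd(x,a)=0$; otherwise $a$ is absorbed, the last factor becomes the simple $F_{l}':=F_{l}a$, the atom disappears, and the cascade moves one step left.

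First I would read the combinatorics off \autoref{Fig:Transition}. The $\BB$-column is identically zero, so only $\Delta=\BBB$ can legally precede $\BB$; hence $\BB$ occurs in a normal form only as the first proper factor. Grouping the proper simples by their columns (the left neighbours $u$ with $\partial u\wedge\cdot=\id$) gives exactly two classes, $X=\{\A,\AB\}$ and $Y=\{\B,\BA,\BAB\}$, together with the isolated $\BB$. I would then tabulate, using $\BBB$ and the complements $\partial$ read off \autoref{Fig:HasseDiagram}, the absorbing transitions $F_{l}\mapsto F_{l}'=F_{l}a$: the moves $\A\xrightarrow{\B}\AB$, $\B\xrightarrow{\A}\BA$ and $\BA\xrightarrow{\B}\BAB$ keep $F_{l}'$ in the same class as $F_{l}$; the moves $\B\xrightarrow{\B}\BB$ and $\AB\xrightarrow{\A}\BB$ land on the isolated $\BB$; and $\BB\xrightarrow{\B}\BBB$, $\BAB\xrightarrow{\A}\BBB$ create $\Delta$.

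The key step is then short. If the absorption keeps $F_{l}'$ in the class of $F_{l}$, then every admissible left neighbour of $F_{l}$ is, by definition of the class, also an admissible left neighbour of $F_{l}'$, so the junction $(F_{l-1},F_{l}')$ — if $l\ge 2$ — is already left-weighted; the cascade halts and $\pd(x,a)=1$ (and $\pd(x,a)\le 1$ if $F_{l}'=\Delta$). The only way to propagate further is to reach $F_{l}'=\BB$ with $l\ge 2$. Because the $\BB$-column is zero, $(F_{l-1},\BB)$ is never weighted and we must pull $m_{2}=\partial F_{l-1}\wedge\BB$. Here $S(\BB)=\Atoms$, so $\Div(\BB)=\{\id,\A,\B,\AB,\BB\}$, and I would split on $F_{l-1}$: if $\partial F_{l-1}\preccurlyeq\BB$ — which holds exactly for $F_{l-1}\in\{\AB,\BAB,\BB\}$ — the whole of $\partial F_{l-1}$ is pulled, $F_{l-1}$ becomes $\Delta$, and the cascade stops; otherwise $F_{l-1}\in\{\A,\BA\}$, the pull is $m_{2}=\B$, and $F_{l-1}$ becomes $F_{l-1}\B\in\{\AB,\BAB\}$, which lies in the same class as $F_{l-1}$, so the admissible neighbour $F_{l-2}$ of $F_{l-1}$ is also admissible for $F_{l-1}\B$ and the junction $(F_{l-2},F_{l-1}\B)$, if present, is weighted. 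In every branch at most two factors are modified, whence $\pd(x,a)\le 2<3$.

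The main obstacle, and the only place needing genuine care, is the interior $\BB$: read purely locally it can never sit legally between two proper factors, so it always forces another rewrite, and one might naively fear an unbounded chain. The content of the argument is that this is always resolved by a single further pull, which rests entirely on the three structural facts extracted from \autoref{Fig:Transition} — that $\BB$ is admissible only as a first factor, that the absorbing transitions respect the two column-classes $X$ and $Y$, and that $\Div(\BB)$ contains the complements $\partial\AB$, $\partial\BAB$, $\partial\BB$ but neither $\partial\A$ nor $\partial\BA$. With these in hand the remainder is a finite check over the six proper simples and the two atoms, and it shows moreover that the bound is tight, $\pd(x,a)=2$ being attained.
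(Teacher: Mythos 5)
Your proof is correct, and it reaches the same sharp bound $\pd(x,a)\le 2$ as the paper. The underlying method is the same finite verification on the eight-element lattice of $G_1$ — following the right-to-left cascade and reading admissibility of adjacent factors off the transition table — but you organise the case analysis differently. The paper simply enumerates all normal-form suffixes of canonical length $1$, $2$ and $3$ (separately for each atom) and computes the penetration distance of each, observing that no length-$3$ suffix propagates further. You instead isolate a structural invariant: the six proper simples fall into two column-classes $\{A,AB\}$ and $\{B,BA,BAB\}$ plus the isolated $BB$, every absorbing transition either preserves the class, creates $\Delta$, or lands on $BB$, and class preservation immediately re-establishes left-weightedness at the next junction. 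This reduces the whole argument to the single genuinely dangerous case of an interior $BB$, which you resolve by one further pull (again landing in the original class or creating $\Delta$). Your version is shorter and explains *why* the cascade dies after at most two steps, at the cost of a few auxiliary computations ($\Div(BB)$, the complements $\partial s$, and the gcds $\partial A\wedge BB=\partial(BA)\wedge BB=B$) that I checked and that are all correct. Both arguments share the same implicit reliance on the standard fact that right multiplication by a simple element is normalised by a single right-to-left pass (and that a newly created $\Delta$ propagates to the front without disturbing earlier factors), so neither is more rigorous than the other on that point. One cosmetic quibble: your claim that $\partial F_{l-1}\preccurlyeq BB$ holds \emph{exactly} for $F_{l-1}\in\{AB,BAB,BB\}$ is true only among the simples that can actually precede $B$ or $AB$ in a normal form (it also holds for $F_{l-1}=B$, which is excluded because the $B$-row of the transition matrix is zero); you should say so explicitly.
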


\begin{proof}
  First consider the atom $\A$ and look at how the change upon
  multiplication by $\A$ can penetrate through possible suffixes of
  the left normal form of~$x$.

  If we look at $s\A$ for each simple element suffix $s$ we see that
  there are three suffixes with canonical length one that have
  non-zero penetration distance:
  \[
  \begin{tabular}{l|CCCCCC}
    $s$ &
    \A  &
    \B  & 
    \AB &
    \BA &
    \BB &
    \BAB
    \\ \hline
    Normal form of $s\A$\rule[2.5ex]{0pt}{0pt} & 
    \A\,\A &
    \BA &
    \BB & 
    \BA\,\A &
    \BB\,\A &
    \Delta
    \\ \hline
    $\pd(s,\A)$\rule[2.5ex]{0pt}{0pt} &
    0 &
    1 &
    1 &
    0 &
    0 &
    1
  \end{tabular}
  \]
  So for a suffix $s$ with canonical length two to have a penetration
  distance
  greater than one the last factor cannot be $\A$, $\BA$, or $\BB$.
  We can also rule out $\BAB$ as in this case a $\Delta$ is created,
  which cannot affect any earlier factors.
  \autoref{Fig:Transition} indicates which pairs of simple elements can be adjacent in a left normal form;
  using this information, we can easily produce a list of possible suffixes:
  \[
  \begin{tabular}{l|CCCCC}
    $s$        &
    \AB\,\B    &
    \BAB\,\B   &
    \A\,\AB    &
    \BA\,\AB   &
    \BB\,\AB
    \\ \hline
    N.F. of $s \A$\rule[2.5ex]{0pt}{0pt} &
    \AB\,\BA     &
    \BAB\,\BA    &
    \AB\,\B      &
    \BAB\,\B     &
    \Delta\,\B
    \\ \hline
    $\pd(s,\A)$\rule[2.5ex]{0pt}{0pt} &
    1        &
    1        &
    2        &
    2        &
    2        
  \end{tabular}
  \]
  So for suffixes with canonical length three there are only two
  possibilities for the last two factors where the penetration distance could be
  greater than two: $\A\,\AB$ and $\BA\,\AB$.  This gives the
  following list of possible suffixes:
  \[
  \begin{tabular}{l|CCCCC}
    $s$            &
    \A\,\A\,\AB    &
    \BA\,\A\,\AB   &
    \BB\,\A\,\AB   &
    \AB\,\BA\,\AB  &
    \BAB\,\BA\,\AB 
    \\ \hline
    N.F. of $s \A$\rule[2.5ex]{0pt}{0pt} &
    \A\,\AB\,\B    &
    \BA\,\AB\,\B   &
    \BB\,\AB\,\B   &
    \AB\,\BAB\,\B  &
    \BAB\,\BAB\,\B 
    \\ \hline
    $\pd(s,\A)$\rule[2.5ex]{0pt}{0pt} &
    2        &
    2        &
    2        &
    2        &
    2        
  \end{tabular}
  \]
  Hence $\pd(x,\A)$ cannot be greater than two.

  We will now follow the same procedure for the atom $\B$.

  If we look at each simple element we see that there are four
  possible suffixes with canonical length one that have non-zero
  penetration distance:
  \[
  \begin{tabular}{l|CCCCCC}
    $s$ &
    \A  &
    \B  & 
    \AB &
    \BA &
    \BB &
    \BAB
    \\ \hline
    N.F. of $s B$\rule[2.5ex]{0pt}{0pt} & 
    \AB     &
    \BB     & 
    \AB\,\B &
    \BAB    &
    \Delta  &
    \BAB\,\B
    \\ \hline
    $\pd(s,A)$\rule[2.5ex]{0pt}{0pt} &
     1 &
     1 &
     0 &
     1 &
     1 &
     0
  \end{tabular}
  \]
  So for a suffix with canonical length two to have a penetration
  distance greater
  than one the last factor cannot be $\AB$, or $\BAB$.  We can also
  rule out $\BB$ as a $\Delta$ is produced.  This gives the following
  list of possible suffixes:
  \[
  \begin{tabular}{l|@{\;}C@{\quad}C@{\quad}C@{\quad}C@{\quad}C@{\quad}C@{\quad}C@{\;}}
    $s$        &
    \A\,\A     &
    \BA\,\A    &
    \BB\,\A    &
    \AB\,\B    &
    \BAB\,\B   &
    \AB\,\BA   &
    \BAB\,\BA  
    \\ \hline
    N.F. of $s \B$\rule[2.5ex]{0pt}{0pt} &
    \A\,\AB     &
    \BA\,\AB    &
    \BB\,\AB    &
    \Delta\,\A  &
    \Delta\,\BA &
    \AB\,\BAB   &
    \BAB\,\BAB  
    \\ \hline
    $\pd(s,\A)$\rule[2.5ex]{0pt}{0pt} &
    1         &
    1         &
    1         &
    2         &
    2         &
    1         &
    1         
  \end{tabular}
  \]
  All words with a penetration distance of two create a $\Delta$ so any preceding
  factors would not be affected.  Hence $\pd(x,\B)$ cannot be greater
  than two.
\end{proof} 

\paragraph{Remark} Since the Garside element $\Delta$ is central, any 
changes to the normal form when multiplying by an atom are limited to
a fixed number of simple factors at the end of the word.  This means
that the normal form can be computed in a single pass with a finite
state transducer (an automaton with output) which is augmented with an
integer counter to count each occurrence of $\Delta$.
  
\subsection{Penetration sequences}%

In the proof of \autoref{Prop:Bounded} we considered sequences of
simple elements in normal form and how the change penetrated through
them when we multiplied by an atom.  We can formalise this idea as
follows.
For the rest of this section, let $G^+$ be a Garside monoid with Garside element $\Delta$.

\begin{definition}
  A word $(s_k, m_k) \cdots (s_2, m_2) (s_1, m_1) \in
  \left(\pSimples \times \pSimples\right)^*$ is a \emph{penetration
    sequence} if, for all $i$, the following hold:
  \begin{gather}
    m_1 \preccurlyeq \partial s_1                                 \label{PS:absorbed} \\
    i < k \implies s_i m_i \ne \Delta                             \label{PS:no-Deltas} \\
    i < k \implies \partial s_{i+1} \wedge s_i = \id              \label{PS:s-normal}  \\
    i < k \implies m_{i+1} = \partial s_{i+1} \wedge s_{i} m_{i}  \label{PS:m} \;.
  \end{gather}

  Let $\PSeq_k$ denote the set of all penetration sequences of length $k$.
\end{definition}

For a penetration sequence, condition \eqref{PS:s-normal} ensures that
$s_k \cdots s_2 s_1$ is in normal form.  If we consider how this
normal form changes once we multiply $s_k \cdots s_2 s_1$ by $m_1$, then conditions \eqref{PS:absorbed} and  \eqref{PS:m} mean that $m_1$ moves into $s_1$ and, for $i<k$, $m_{i+1}$ is the simple factor that moves out of $s_{i}$ into $s_{i+1}$; in particular, one has $m_i \preccurlyeq \partial s_i$ for $i=1,2,\ldots,k$.

We are only interested in the canonical factors of the word and not
the initial power of $\Delta$.  Also, we are only interested in the
region where there is non-trivial movement between the factors.  So we
restrict to proper simple elements.

We only allow a $\Delta$ to be created at the very start of the
sequence, as otherwise the initial terms just correspond to conjugating
by $\Delta$ which we consider a trivial change; hence condition
\eqref{PS:no-Deltas}.
\medskip

For an element $x \in G^+$ and a simple element $s \in \Simples$ we
will say that a penetration sequence
$(s_k, m_k) \cdots (s_2, m_2) (s_1, m_1)$ is a \emph{penetration sequence for $xs$} if
$s_k \cdots s_2 s_1$ is a suffix of the normal form of $x$, and
$m_1 = \partial s_1 \wedge s$.  The penetration distance for the product $xs$ equals the length of the
longest penetration sequence for $xs$.

Let \[ G^+_{(k)} := \big\{ x \in G^+ : \cl(x) = k,\, \inf(x) = 0 \big\} \;. \]

\begin{proposition} \label{NF:prop_growth_rate}
  There exist constants $\alpha$, $\beta$, $p$, $q \ge 0$ such that
  \begin{align*}
    |\PSeq_k|  \in \Theta(k^p \alpha^k) \qquad\text{and}\qquad
    |G^+_{(k)}| \in \Theta(k^q \beta^k).
  \end{align*}
  Moreover, one has
  \begin{align*}
    \alpha = \lim_{k\to\infty} |\PSeq_k|^{1/k} \qquad\text{and}\qquad
    \beta = \lim_{k\to\infty} |G^+_{(k)}|^{1/k} \;,
  \end{align*}
  that is, the constants $\alpha$ and $\beta$ are the exponential growth rates
  of $|\PSeq_k|$ and $|G^+_{(k)}|$, respectively.
\end{proposition}
\begin{proof}
  Since a word in $\left(\pSimples \times \pSimples\right)^*$ is
  a penetration sequence if each consecutive pair of letters satisfy
  conditions \eqref{PS:absorbed}--\eqref{PS:m}, we have that $\PSeq_*
  = \bigcup_k \PSeq_k$ is a regular language.  Similarly, $G^+_{(*)} =
  \bigcup_k G^+_{(k)}$ is also a regular language.  Furthermore, these
  languages are factorial, that is any subword of a word in the
  language remains within the language.  The claim then follows by \cite[Corollary 4]{Shur08}.
\end{proof}

For the rest of this section, let $\alpha$, $\beta$, $p$ and $q$ as in \autoref{NF:prop_growth_rate}.

\begin{corollary} \label{NF:coro_growth_rate_comparison}
 One has either $\alpha < \beta$, or $\alpha=\beta$ and $p\le q$.
\end{corollary}

\begin{proof}
  The map $\iota: \PSeq_* \to G^+_{(*)} \times \pSimples$ defined by
  \[ 
    \iota \from (s_k, m_k)\cdots (s_2, m_2) (s_1, m_1) \mapsto (s_k \cdots s_2 s_1, m_1)
  \]
  gives an embedding of $\PSeq_k$ in $G^+_{(k)} \times \pSimples$ for every $k$.
  In particular, one has $|\PSeq_k| \in O(|G^+_{(k)}|)$ which implies the claim by
  \autoref{NF:prop_growth_rate}.
\end{proof}

\begin{corollary}\label{coro_PSeq_finite}
The penetration distance is bounded (that is, there exists a constant $C$ such that $\pd(x,s)<C$ holds for all
$x\in G^+$ and all $s\in\Simples$) if and only if $\alpha=0$.
\end{corollary}
\begin{proof}
The penetration distance is bounded if and only if $|\PSeq_k|=0$ for sufficiently large $k$, which is equivalent to $\alpha=0$ by \autoref{NF:prop_growth_rate}.
\end{proof}

\begin{example}\label{ex_Artin_type_I}\upshape
Consider the Artin group of type $\Artin{I}_2(p)$, denoting the classical Garside monoid \cite{braid_artin_groups} by $\Artin{I}_2(p)$ and the dual Garside monoid \cite{braid_dual} by~$\Artin{i}_2(p)$.
\autoref{Fig:typeI} shows the lattices of the simple elements with respect to $\preccurlyeq$.

\begin{figure}[!htbp]
  \subfloat[$\Artin{I}_2(p)$]{
      \begin{xy}
        0;<3em,0em>:<0em,3em>::
        (1,0)*+{\id}="e";
        (0,1)*+{a}="a";
        (2,1)*+{b}="b";
        (0,2)*+{ab}="ab";
        (2,2)*+{ba}="ba";
        (0,3)*+{aba}="aba";
        (2,3)*+{bab}="bab";
        (0,4)*+{\underbrace{aba\cdots}_{p-1 \text{ factors}}}="abab";
        (2,4)*+{\underbrace{bab\cdots}_{p-1 \text{ factors}}}="baba";
        (1,5)*+{\Delta}="D";
        {\ar@{->}^{a} "e";"a"};
        {\ar@{->}_{b} "e";"b"};
        {\ar@{->}^{b} "a";"ab"};
        {\ar@{->}_{a} "b";"ba"};
        {\ar@{->}^{a} "ab";"aba"};
        {\ar@{->}_{b} "ba";"bab"};
        {\ar@{..} "aba";"abab"};
        {\ar@{..} "bab";"baba"};
        {\ar@{->} "abab";"D"};
        {\ar@{->} "baba";"D"};
      \end{xy}
    }
  \hfill
  \subfloat[$\Artin{i}_2(p)$]{
      \raisebox{8mm}{\begin{xy}
        ;<3em,0em>:<0em,5em>::
        (3,0)*+{\id}="e";
        (0,1)*+{c_1}="a";
        (1,1)*+{c_2}="b";
        (3,1)*+{\cdots};
        (5,1)*+{c_{p-1}}="c";
        (6,1)*+{c_p}="d";
        (3,2)*+{\Delta}="D";
        {\ar@{->}^{c_1} "e";"a"};
        {\ar@{->}_{c_2} "e";"b"};
        {\ar@{->}^{c_{p-1}} "e";"c"};
        {\ar@{->}_{c_p} "e";"d"};
        {\ar@{->}^{c_2} "a";"D"};
        {\ar@{->}_{c_3} "b";"D"};
        {\ar@{->}^{c_p} "c";"D"};
        {\ar@{->}_{c_1} "d";"D"};
      \end{xy}}
    }
 \caption{Hasse diagrams for the Garside monoids $\Artin{I}_2(p)$ and $\Artin{i}_2(p)$.}
 \label{Fig:typeI}
\end{figure}

First consider $\Artin{I}_2(p)$.
Note that the starting set of an element $s\in\pSimples$ consists of a single atom.
Let $s_1,s_2\in \pSimples$, where $s_2$ is the product of~$\ell$ atoms.  It is obvious from the Hasse diagram that $\partial s_2\wedge s_1=\id$ holds if and only if either $\ell$ is even and $S(s_1)=S(s_2)$, or $\ell$ is odd and $S(s_1) = \Atoms\setminus S(s_2)$.
Hence, one has $|G^+_{(0)}| = 1$ and $|G^+_{(k)}| = 2(p-1)^k$ for $k>0$.
Moreover, one has
$\big|\{m_1 \in \pSimples : s_1m_1\preccurlyeq\Delta\}\big| = p-\ell$.
Summing over all possibilities for $s_1$ yields
$|\PSeq_1| = 2\sum_{\ell=1}^{p-1}(p-\ell) = p(p-1)$.  $|\PSeq_0| = 1$ holds trivially.
Finally, if $\partial s_2\wedge s_1=\id$ and $\Delta\not\preccurlyeq s_1m_1$ hold, then one has
$\partial s_2\wedge (s_1m_1)=\id$, whence one has $|\PSeq_k| = 0$ for $k>1$.
We thus obtain $\alpha=0$, $\beta=p-1$,
\[
 \sum_{k=0}^\infty |\PSeq_k| z^k = p(p-1)z + 1
 \qquad\text{and}\qquad
 \sum_{k=0}^\infty |G^+_{(k)}| z^k = \frac{-(p-1)z - 1}{(p-1)z - 1} \;.
\]

Now consider $\Artin{i}_2(p)$.  We have $\pSimples = \{c_1,c_2,\ldots,c_p\}$.
Moreover, $c_i c_j$ is simple if and only if $j=i+1$ modulo $p$, and $\partial c_i \wedge c_j=\id$ otherwise.
This immediately yields $|G^+_{(0)}| = 1$, $|G^+_{(1)}| = p$ and $|G^+_{(k)}| = p(p-1)^{k-1}$ for $k>1$, as well as $|\PSeq_0| = 1$, $|\PSeq_1| = p$ and $|\PSeq_k| = 0$ for $k>1$.
We thus obtain $\alpha=0$, $\beta=p-1$,
\[
 \sum_{k=0}^\infty |\PSeq_k| z^k = pz + 1
 \qquad\text{and}\qquad
 \sum_{k=0}^\infty |G^+_{(k)}| z^k = \frac{-z - 1}{(p-1)z - 1} \;.
\]

In particular, the Garside monoids $\Artin{I}_2(p)$ and $\Artin{i}_2(p)$ have bounded penetration distance by \autoref{coro_PSeq_finite}.
\end{example}

Even if there is no strict bound on the penetration distance $\pd(x,s)$ for a given Garside monoid, it is natural to ask whether the \emph{expected value} of the penetration distance is uniformly bounded in the sense of \autoref{Conj:BEPD} for a particular distribution of $x$ and $s$.  The following theorem gives a sufficient condition for this to be the case for natural distributions of $x$ and $s$.

\begin{theorem} \label{Thm:BoundedPenetrationDistance}
  Let $\nu_{k}$ be the uniform probability measure on $G^+_{(k)}$.
  If $\alpha < \beta$ then the expected value $\mathbf{E}_{\nu_{k} \times \mu_\Atoms}[\pd]$ of
  the penetration distance with respect to $\nu_k\times \mu_\Atoms$ is uniformly bounded (that is, the bound does not depend on $k$).
\end{theorem}

\begin{proof}
  If $\alpha < 1$ then, as $|\PSeq_k|$ only takes integer values, we
  have that $|\PSeq_k|$ is eventually $0$ and hence the penetration
  distance is bounded.  So we may assume that $\alpha \geq 1$.

  Let
  \[
    X_{i,k} := \big\{ (x,a) \in G^+_{(k)} \times \Atoms : \pd(x,a) = i \big\} \;,
  \]
  whence
  \[
    \mathbf{E}_{\nu_{k} \times \mu_\Atoms}[\pd] = 
    \sum_{i = 0}^k i 
      \frac{\left|X_{i,k}\right|}
           {|G^+_{(k)}|\cdot |\Atoms|} \;.
  \]

  Given $(x,a) \in X_{i,k}$ with $i>0$, let $(s_l, m_l) \cdots  (s_2, m_2) (s_1, m_1)$
  be a maximal penetration sequence for $xa$.  From the
  definition of $X_{i,k}$ we have that $l = i$ and, as $a$ is an atom,
  $m_1 = a$.  The simple factors $s_i,\ldots, s_2, s_1$ are the final
  $i$ factors of the normal form of $x$, so $x = x' s_i \cdots s_2 s_1$
  for some $x' \in G^+_{(k-i)}$.  This gives an injective map $X_{i,k}
  \hookrightarrow G^+_{(k-i)} \times \PSeq_i$, therefore we have
  \begin{equation} \label{EQ:BPD1}
    |X_{i,k}| \le |G^+_{(k-i)}|\cdot |\PSeq_i| \;.
  \end{equation}

  By \autoref{NF:prop_growth_rate}, there exist $C$ and $K$ such that for all
  $k > K$ we have
  \begin{align*}
    |\PSeq_k| \leq C k^p \alpha^k
    \qquad\text{and}\qquad
    \frac{1}{C} k^q \beta^k \leq |G^+_{(k)}| \leq C k^q \beta^k \;.
  \end{align*}

  For $k-i > K$ we thus have
  \begin{equation} \label{EQ:BPD2}
    \frac{|G^+_{(k-i)}|\cdot |\PSeq_i|}{|G^+_{(k)}|}
      \leq C^3 \frac{(k-i)^q \beta^{k-i}\ i^p \alpha^i}{k^q \beta^k}
      \leq C^3\, i^p \! \left(\frac{\alpha}{\beta}\right)^i \;.
  \end{equation}

  For $k-i \leq K$ and $k > 2K$ we have $i > K$ and thus
  \begin{align}
    \frac{|G^+_{(k-i)}|\cdot |\PSeq_i|}{|G^+_{(k)}|}
      \leq \frac{D |\PSeq_i|}{|G^+_{(k)}|}
      \leq C^2 D \frac{i^p \alpha^i}{k^q \beta^k}
      \leq C^2 D k^{p-q} \!\left(\frac{\alpha}{\beta}\right)^k \label{EQ:BPD3} \;,
  \end{align}
  where $D$ is the largest value of $|G^+_{(j)}|$ for $j \in \{1,2,\ldots,K\}$.

  By making use of \eqref{EQ:BPD1} and splitting the sum into two
  parts we obtain
  \[
    \mathbf{E}_{\nu_{k} \times \mu_\Atoms}[\pd] \leq
    \underbrace{
      \sum_{i = 1}^{k-K-1} i 
        \frac{|G^+_{(k-i)}|\cdot |\PSeq_i|}
             {|G^+_{(k)}|\cdot |\Atoms|}
    }_{S_k}
    +
    \underbrace{
      \sum_{i = k-K}^{k} i 
        \frac{|G^+_{(k-i)}|\cdot |\PSeq_i|}
             {|G^+_{(k)}|\cdot |\Atoms|}
    }_{T_k} \;.
  \]

  By \eqref{EQ:BPD2}, $k > K$ implies
  \[
    S_k 
      <
    \frac{C^3}{|\Atoms|}
    \sum_{i = 0}^{k-K-1}\!\! 
    i^{p+1} \! \left(\frac{\alpha}{\beta}\right)^i \;,
  \]
  and thus $S_k$ converges as $k \to \infty$ if $\alpha < \beta$.

  By \eqref{EQ:BPD3}, $k > 2K$ implies
  \[
    T_k
      <
    \frac{C^2 D}{|\Atoms|}
    \sum_{i = k-K}^{k} i 
    k^{p-q} \!\left(\frac{\alpha}{\beta}\right)^k
      <
    \frac{C^2 D(K+1)k^{p-q+1}}{|\Atoms|}
    \left(\frac{\alpha}{\beta}\right)^k \;,
  \]
  and thus $T_k$ converges as $k \to \infty$ if $\alpha < \beta$.

  Hence, if $\alpha < \beta$ then $\mathbf{E}_{\nu_{k} \times
    \mu_\Atoms}[\pd]$ is eventually bounded by a convergent sequence
  and so is bounded uniformly in $k$.
\end{proof}

The regular language $G^+_{(*)}\subset (\pSimples)^*$ is accepted by a deterministic finite automaton~$\Gamma$ defined as follows:
The set of states is $\mathcal{V}_\Gamma = \{ 1_\Gamma \} \cup \pSimples$, where $1_\Gamma$ is the initial state. The (partial) transition function $\mu_\Gamma:\mathcal{V}_\Gamma\times\pSimples\pto\pSimples\subset \mathcal{V}_\Gamma$
is given by
\begin{align*}
 \mu_\Gamma(1_\Gamma,s_1) &= s_1 \\[0.5ex]
 \mu_\Gamma(s_2,s_1) &=
   \begin{cases}
      s_1  & \text{if } \partial s_2 \wedge s_1 = \id \\
      \bot & \text{otherwise} 
   \end{cases}
\end{align*}
for all $s_1,s_2\in\pSimples$.
All states are accept states.

In particular, $|G^+_{(k)}|$ is the number of paths of length~$k$ in~$\Gamma$ that start at $1_\Gamma$.

Under an additional assumption on $\Gamma$, we can show that the condition from \autoref{Thm:BoundedPenetrationDistance} is necessary and sufficient for the expected penetration distance to be uniformly bounded.

\begin{theorem}\label{Thm:UnboundedPenetrationDistance}
Let $\nu_{k}$ be the uniform probability measure on $G^+_{(k)}$.  If $\Gamma\setminus\{1_\Gamma\}$ is strongly connected and $\alpha = \beta$ holds, then the expected value $\mathbf{E}_{\nu_{k} \times \mu_\Atoms}[\pd]$ of the penetration distance with respect to $\nu_k\times\mu_\Atoms$ satisfies
$\lim_{k\to\infty}\mathbf{E}_{\nu_{k} \times \mu_\Atoms}[\pd] = \infty$.
\end{theorem}

We prepare the proof of \autoref{Thm:UnboundedPenetrationDistance} with some auxiliary results.

For any integer $k$ and any atom $a\in\Atoms$ we define
\[
 \PSeq_k^a = \big\{ (s_k,m_k)\cdots(s_2,m_2)(s_1,m_1)\in\PSeq_k : m_1 = a \big\}
\]
and
\[
 \PSeq_k^\Atoms = \big\{ (s_k,m_k)\cdots(s_2,m_2)(s_1,m_1)\in\PSeq_k : m_1 \in \Atoms \big\}
    = \bigcup_{a\in\Atoms} \PSeq_k^a \;.
\]

\begin{lemma}\label{Lemma:PSeq_a}
One has $|\PSeq_k^\Atoms|\in\Theta(k^p \alpha^k)$, and $|\PSeq_k^a|\in O(k^p \alpha^k)$ holds for all $a\in\Atoms$.  Moreover, there exists $a\in\Atoms$ such that $|\PSeq_k^a|\in \Theta(k^p \alpha^k)$ holds.
\end{lemma}
\begin{proof}
For $w=(s_r,m_r)\cdots(s_1,m_1)\in\PSeq_*$ let $\rev(w)=(s_1,m_1)\cdots (s_r,m_r)$,
and for $S\subseteq \PSeq_*$ let $\rev(S)=\{ \rev(w) : w\in S \}$.
The set $\rev(\PSeq^\Atoms_*)$ is a regular language that is prefix-closed, whence there exist positive constants~$d$ and~$\gamma$, such that one has
$|\PSeq^\Atoms_k| = |\rev(\PSeq^\Atoms_k)| \in \Theta(k^{d} \gamma^k)$ by \cite[Corollary 4]{Shur08}.
As $\PSeq_k^\Atoms \subseteq \PSeq_k$ holds, we have $|\PSeq_k^\Atoms| \in O(|\PSeq_k|)$ and thus either $\gamma<\alpha$ or $\gamma=\alpha$ and $d\le p$ by \autoref{NF:prop_growth_rate}.

Similarly, for every $a\in\Atoms$, there exist positive constants $d_a$ and $\gamma_a$, such that
$|\PSeq^a_k| = |\rev(\PSeq^a_k)| \in \Theta(k^{d_a} \gamma_a^{\,k})$ holds, and $\PSeq_k^a \subseteq \PSeq_k^\Atoms$ implies that one has either $\gamma_a<\gamma$ or $\gamma_a=\gamma$ and $d_a\le d$.
Since $\PSeq_k^\Atoms = \bigcup_{a\in\Atoms} \PSeq_k^a$ holds and $\Atoms$ is finite, we have
$\gamma = \max_{a\in\Atoms}\{\gamma_a\}$ and $d = \max \{ d_a : a\in\Atoms, \gamma_a=\gamma\}$.

Hence, to complete the proof it is sufficient to prove $|\PSeq_k| \in O(|\PSeq_k^\Atoms|)$, for this would yield $\gamma=\alpha$ and $d=p$ with \autoref{NF:prop_growth_rate}.
To do this we will construct an injective map $\phi\from\PSeq_k \to \PSeq_k^\Atoms \times \Simples^2$ as follows.

For each $s \in \pSimples$ we fix a representative $w_s \in \Atoms^*$.
Now suppose we are given $S = (s_k,m_k) \cdots (s_2,m_2) (s_1,m_1) \in \PSeq_k$.
Let $x = s_k \cdots s_2 s_1$.
Using the representative $w_{m_1} = a_1 a_2 \cdots a_r$ we construct a sequence of elements $x_1,x_2,\ldots,x_{r+1}$ by defining $x_1=x$ and $x_{j+1} = x_j a_j$ for $j=1,2,\ldots,r$;
in particular, $x_{r+1}=xm_1$.

From $x_j \preccurlyeq x_{j+1}$ we obtain $\sup(x_j) \leq \sup(x_{j+1})$ for $j=1,2,\ldots,r$.
On the other hand, \eqref{PS:absorbed} yields $x m_1 \preccurlyeq s_k\cdots s_2\Delta$, and thus
$\sup(x_{r+1}) \le k=\sup(x_1)$.
Thus, $\sup(x_j)=k$ holds for $j=1,2,\ldots,r+1$, so there exist $s^{(j)}_1,s^{(j)}_2,\ldots,s^{(j)}_k\in\Simples\setminus\{\id\}$ such that $s_k^{(j)}\cdots s_2^{(j)} s_1^{(j)}$ is the normal form of $x_j$.

For $j=1,2,\ldots,r$, let
$ T_j = (s_{l_j}^{(j)}, m_{l_j}^{(j)}) \cdots (s_2^{(j)}, m_2^{(j)}) (s_1^{(j)}, m_1^{(j)}) \in \PSeq_{l_j}^\Atoms$
be a maximal penetration sequence for $x_j a_j$.
We have $l_j\le\sup(x_j)=k$, and $\sup(x_ja_j)=\sup(x_j)=k$ implies $a_j\preccurlyeq\partial s^{(j)}_1$, and thus $l_j>0$ and $m_1^{(j)} = a_j$.

As we have $s_k \neq s_km_k=s_k^{(r+1)}$, we can now fix $j\in\{1,2,\ldots,r\}$ minimal subject to the condition $s_k^{(j+1)} \ne s_k$.
Assume $l_j<k$ holds.
By minimality of~$j$, we have $s_k^{(j)}=s_k \neq s_k^{(j+1)}$, and thus
$\partial s_{l_j+1}^{(j)}\wedge s_{l_j}^{(j)} m_{l_j}^{(j)} \ne\id$.
Maximality of~$T_j$ implies $s_{l_j}^{(j)} m_{l_j}^{(j)} = \Delta$, and thus
\[
  \Delta \preccurlyeq s_{k-1}^{(j)} \cdots s_2^{(j)} s_1^{(j)} m_1^{(j)}
         = s_{k-1} \cdots s_2 s_1 a_1 a_2 \cdots a_j
         \preccurlyeq s_{k-1} \cdots s_2 s_1 m_1 \;.
\]
The latter implies $\Delta \preccurlyeq s_{k-1} m_{k-1}$, in contradiction to \eqref{PS:no-Deltas}.
Thus, we have $l_j=k$.

We can now define $\phi$ to be the map $S \mapsto (T_j, n_1, n_2)$ where $n_1 = a_1 a_2 \cdots a_{j-1}$
and $n_2 = a_{j+1} a_{j+2} \cdots a_r$.

As $l_j=k$ holds, the factors of $T_j$ determine $s^{(j)}_1,s^{(j)}_2,\ldots,s^{(j)}_k$, as well as $m^{(j)}_1$.
Since $m_1 = n_1 m_1^{(j)} n_2$ and
$s_k\cdots s_2 s_1 m_1 = s_k^{(j)}\cdots s_2^{(j)} s_1^{(j)} m_1^{(j)}n_2$ hold, we can recover~$S$ from~$T_j$, $n_1$ and $n_2$, proving that the map $\phi$ is injective.
\end{proof}

For $s\in\pSimples$ and $k\ge1$ we define $G^+_{(k)}(s) = G^+_{(k)} \cap \big(\pSimples\big)^{\!*\!}s = \{ s_k\cdots s_2 s_1 : s_1 = s \}$.  Recall the automaton~$\Gamma$ with set of states $\mathcal{V}_\Gamma = \{ 1_\Gamma \} \cup \pSimples$ accepting the regular language~$G^+_{(*)}$.
The elements of $G^+_{(k)}(s)$ correspond to the paths of length~$k-1$ in~$\Gamma$ that start at $s\in\mathcal{V}_\Gamma$.

\begin{lemma}\label{Lemma:GPlus_s}
If $\Gamma\setminus\{1_\Gamma\}$ is strongly connected, then $|G^+_{(k)}(s)| \in \Theta(k^q\beta^k)$ holds for every $s\in\pSimples$.
\end{lemma}
\begin{proof}
For $w=s_r\cdots s_2 s_1\in G^+_{(*)}$ define $\rev(w)=s_1 s_2\cdots s_r$,
and for $S\subseteq G^+_{(*)}$ let $\rev(S)=\{ \rev(w) : w\in S \}$.
For any $s\in\pSimples$, the set $\rev(G^+_{(*)}(s))$ is a regular language that is prefix-closed.
Thus, there exist positive constants~$d_s$ and~$\gamma_s$ such that one has
$|G^+_{(k)}(s)| = |\rev(G^+_{(k)}(s))| \in \Theta(k^{d_s} \gamma_s^{\,k})$ by \cite[Corollary~4]{Shur08}.

Let $s,t\in\pSimples$.  As $\Gamma\setminus\{1_\Gamma\}$ is strongly connected, there are $u_1,u_2,\ldots,u_r\in\pSimples$ satisfying $u_1=s$, $u_r=t$ and $u_1 u_2\cdots u_r\in G^+_{(*)}$, and thus $G^+_{(k)}(s)\,u_2\cdots u_r \subseteq G^+_{(k+r-1)}(t)$.
Hence, $k^{d_s} \gamma_s^k\in O((k+r-1)^{d_t} \gamma_t^{k+r-1}) = O(k^{d_t} \gamma_t^k)$ holds, whence one has either
$\gamma_s < \gamma_t$ or $\gamma_s = \gamma_t$ and $d_s \le d_s$.  As $s$ and $t$ were arbitrary, there are positive constants $d$ and $\gamma$ such that $\gamma_s=\gamma$ and $d_s=d$ hold for all $s\in\pSimples$.

The claim then follows from \autoref{NF:prop_growth_rate}, using
$G^+_{(k)} = \bigcup_{s\in\pSimples} G^+_{(k)}(s)$.
\end{proof}

\begin{proof}[Proof of \autoref{Thm:UnboundedPenetrationDistance}]
By \autoref{NF:prop_growth_rate}, \autoref{Lemma:PSeq_a} and \autoref{Lemma:GPlus_s}, and using the finiteness of $\pSimples$, there exist $a\in\Atoms$ and positive constants~$C$ and~$K$ such that for any $j\ge K$ one has $|\PSeq_j^a| \ge C j^p \alpha^j$, $|G^+_{(j)}| \le \frac1C j^q \beta^j$
and $|G^+_{(j)}(s)| \ge C j^q \beta^j$ for all $s\in\pSimples$.

For $k\ge 2K$ let $i=\lceil\frac{k}2\rceil$ and
$X_k = \{ s_k\cdots s_2 s_1 \in G^+_{(k)} : \pd(s_k\cdots s_2 s_1,a)\ge i \}$.  We define $\rho_k:X_k\to\PSeq_i^a$ by $\rho_k(s_k\cdots s_2 s_1) = (s_i,m_i)\cdots (s_2,m_2)(s_1,m_1)$, where $m_1=a\preccurlyeq \partial s_1$ and $m_{\ell+1} = \partial s_{\ell+1}\wedge s_\ell m_\ell \neq\id$ for $\ell=1,2,\ldots i-1$.
For any $S=(s_{i},m_{i})\cdots (s_2,m_2)(s_1,m_1)\in\PSeq_{i}^a$ and
$T=s_k\cdots s_{i}\in G^+_{(k-i+1)}(s_{i})$, one obtains $U=s_k\cdots s_2 s_1 \in X_k$ with $\rho_k(U)=S$.
As the map $(S,T)\mapsto U$ is obviously injective, one has
$|\rho_k^{-1}(S)| \ge |G^+_{(k-i+1)}(s_{i})| \ge C(k-i+1)^q\beta^{k-i+1}$,
and thus $|X_k| \ge |\PSeq_{i}^a|\cdot C(k-i+1)^q\beta^{k-i+1}
\ge C^2 i^p\alpha^{i} (k-i+1)^q \beta^{k-i+1}$.
We then obtain, using $\alpha=\beta$,
\begin{align*}
 \mathbf{E}_{\nu_{k} \times \mu_\Atoms}[\pd]
  &= \frac1{|G^+_{(k)}|\cdot|\Atoms|}\!\!\!\!\sum_{\mbox{\scriptsize$\begin{aligned}\quad x&\in G^+_{(k)}\\[-4pt]a'\!\!&\in\Atoms\end{aligned}$}}\!\!\!\!\!\pd(x,a')
  \ge \frac{i\cdot|X_k|}{|G^+_{(k)}|\cdot|\Atoms|} \\
  &\ge \frac{C^3\beta}{|\Atoms|}
          \cdot i^{p+1}\cdot\left(\frac{k-i+1}{k}\right)^q
  \ge \frac{C^3\beta}{2^{p+q+1}|\Atoms|} \cdot k^{p+1}
 \;,
\end{align*}
proving $\lim_{k\to\infty}\mathbf{E}_{\nu_{k} \times \mu_\Atoms}[\pd] = \infty$ as claimed.
\end{proof}

\subsection{Computing the exponential growth rates}%

Recall the automaton $\Gamma$ accepting the regular language $G^+_{(*)}$ from the previous section.
We have $G^+_{(k)} = \bigcup_{s\in\pSimples} G^+_{(k)}(s)$ and the elements of $G^+_{(k)}(s)$ correspond to the paths of length $k-1$ in $\Gamma$ that start at $s\in\pSimples$.
Hence, $|G^+_{(k)}|$ is the sum of all entries in the $(k-1)$-st power of the adjacency matrix $M_\Gamma$ of $\Gamma\setminus\{1_\Gamma\}$; the latter is the matrix whose entries, for $s_1,s_2\in\pSimples$, are given by
\[
(M_\Gamma)_{s_2,s_1} = \begin{cases}
  1 & \text{if } \partial s_2 \wedge s_1 = \id \\
  0 & \text{otherwise}
\end{cases}  \;.
\]

Similarly, the regular language $\PSeq_*\subset\mathcal{P}^*$, where $\mathcal{P}=\{ (s,m) \in \pSimples \times \pSimples : sm \preccurlyeq \Delta\}$ is accepted by a deterministic finite automaton $\Pi$ defined as follows:  
The set of states is
$\mathcal{V}_\Pi = \{1_\Pi\} \cup \mathcal{P}$, where $1_\Pi$ is the initial state.
The (partial) transition function
$\mu_\Pi:\mathcal{V}_\Pi \times \mathcal{P} \pto \mathcal{P} \subset \mathcal{V}_\Pi$ is given by
\begin{align*}
\mu_\Pi\big(1_\Pi,(s_1,m_1)\big) &= (s_1,m_1) \\[0.5ex]
\mu_\Pi\big((s_2,m_2),(s_1,m_1)\big) &=
  \begin{cases}
      (s_1,m_1) & \text{if } s_1 m_1 \ne \Delta, \,
                     \partial s_2 \wedge s_1 = \id, \\
                & \qquad\text{and } m_2 = \partial s_2 \wedge s_1 m_1 \quad\, \\
      \bot      & \text{otherwise}
  \end{cases}
\end{align*}
for all $(s_1,m_1),(s_2,m_2)\in\mathcal{P}$.  All states are accept states.

The number of strings of length $k$ in $\PSeq_*$, that is the number of paths of length~$k$ in~$\Pi$ that start at $1_\Pi$, is the sum of all entries in the $(k-1)$-st power of the adjacency matrix~$M_\Pi$ of $\Pi\setminus\{1_\Pi\}$; the latter is the matrix given by
\[
(M_\Pi)_{(s_2,m_2)(s_1,m_1)} =
   \begin{cases}
      1 & \text{if } s_1 m_1 \ne \Delta, \,
                 \partial s_2 \wedge s_1 = \id, \,
                 m_2 = \partial s_2 \wedge s_1 m_1 \\
      0 & \text{otherwise}
   \end{cases}
\]
for $(s_1,m_1),(s_2,m_2)\in\mathcal{P}$.

If $M\in\{M_\Pi,M_\Gamma\}$ and $\mathcal{L}$ is the language accepted by the respective automaton, then we have
$|\mathcal{L}_0| = 1$ and $|\mathcal{L}_k| = w M^{k-1} v$ for $k>0$,
where $v$ is the column vector with all entries~$1$ and $w$ is the row
vector with all entries~$1$.  Computing the minimal polynomial of $M$ and the initial terms of the
sequence $(|\mathcal{L}_k|)_k$ allows us, in principle, to compute the generating function of $\mathcal{L}$ exactly.  (The exponential growth rate of $\mathcal{L}$ is then the reciprocal of the radius
of convergence of the generating function.)
The results of applying this procedure to several different Garside
monoids are listed in \autoref{Tab:GrowthRates}.  Here $T_1 = \langle
\A, \B \mid \A\,\B \A = \A^2 \rangle^+$ is the Garside monoid from \autoref{SS:ToyGarsideGroup} and
$T_2 = \langle x,y,z \mid x z x y = y z x^2, y z x^2 z = z x y z x, z
x y z x = x z x y z \rangle^+$ is a Garside monoid described in
\cite{Picantin03}.  The remaining names denote Artin groups, with upper-case names referring to the corresponding classical Garside monoid \cite{braid_artin_groups} and lower-case names to the dual Garside monoid \cite{braid_dual}.

In practice, the above approach is limited by the size of the matrix $M_\Pi$.  Approximations of the exponential growth rates of $\PSeq_*$ and $G^+_{(*)}$, with guaranteed error bounds, can be obtained much more easily using an iterative algorithm~\cite[Algorithm 1]{Shur10}, which is based on the observation that if $P$ is an irreducible primitive matrix and $x$ is a vector with positive entries, then one has
\[
 \min_j\frac{(Px)_j}{x_j} \le \gamma \le \max_j\frac{(Px)_j}{x_j} \;,
\]
where $\gamma$ is the eigenvalue of $P$ of largest absolute value (which is a positive real number).
The growth rates in \autoref{Tab:GrowthRates} where no growth function is given were obtained using this method.

\begin{table}[!htbp]
\renewcommand{\arraystretch}{1.394}
  \centering\small
  \begin{tabular}{@{}CCCCC@{}} \toprule
    & \sum_{k=0}^\infty |\PSeq_k| z^k
    & \alpha
    & \sum_{k=0}^\infty |G^+_{(k)}| z^k
    & \beta \\ \midrule
    T_1
      & 8z^2 + 13z + 1
      & 0
      & \frac{-z^3 - 4z - 1}{2z - 1}
      & 2 \\
    T_2
      & \frac{-57z^3 - 102z^2 - 78z - 1}{z - 1}
      & 1
      & \frac{-11z^3 + 13z^2 - 9z - 1}{12z^3 - 16z^2 + 13z - 1}
      & 11.72\ldots \\
    \Artin{A}_2
      & 6z + 1
      & 0
      & \frac{-2z - 1}{2z - 1}
      & 2 \\ 
    \Artin{A}_3
      & \frac{4z^6 - \cdots
                + 97z + 1}{z^4 - 10z^3 + 15z^2 - 7z + 1}
      & 3.532\ldots
      & \frac{6z^3 - 3z^2 - 14z - 1}{6z^3 - 15z^2 + 8z - 1}
      & 5.449\ldots \\
    \Artin{A}_4
      & \frac{280z^{36} + \cdots 
                + 1592z + 1}%
             {4z^{34} + \cdots 
                 - 68z + 1}
      & 12.82\ldots
      & \frac{-144z^5 + \cdots
                 - 90z - 1}%
             {144z^5 - \cdots
                 + 28z - 1}
      & 18.71\ldots \\
    \Artin{A}_5
      &
      & 53.01\ldots
      &
      & 77.40\ldots \\
    \Artin{a}_2
      & 3z + 1
      & 0
      & \frac{-z - 1}{2z - 1}
      & 2 \\ 
    \Artin{a}_3
      & \frac{8z^5 - \cdots
                 + 23z + 1}%
             {4z^4 - 8z^3 + 8z^2 - 5z + 1}
      & 3.130\ldots
      & \frac{2z^4 + 4z^3 - 5z^2 + 4z + 1}{10z^4 - 20z^3 + 19z^2 - 8z + 1}
      & 4.839\ldots \\
    \Artin{a}_4
      & \frac{-2457600z^{31} + \cdots 
                 + 154z + 1}%
             {1638400z^{30} - \cdots 
                 - 36z + 1}
      & 8.822\ldots
      & \frac{-40z^8 - \cdots
                 + 16z + 1}%
             {560z^8 - \cdots
                 - 24z + 1}
      & 12.83\ldots \\
    \Artin{a}_5
      &
      & 25.31\ldots
      &
      & 35.98\ldots \\
    \Artin{a}_6
      &
      & 73.95\ldots
      &
      & 104.87\ldots \\
    \Artin{B}_2
      & 12z + 1 & 0
      & \frac{-3z - 1}{3z - 1}
      & 3 \\ 
    \Artin{B}_3
      & \frac{270z^{13} - \cdots
                 - 338z - 1}
             {9z^{11} - \cdots
                 + 24z - 1}
      & 6.44\ldots
      &  \frac{60z^4 - 77z^3 - 59z^2 + 27z + 1}{60z^4 - 149z^3 + 91z^2 - 19z + 1}
      & 12.75\ldots \\
    \Artin{B}_4
      &
      & 38.21\ldots
      &
      & 71.39\ldots \\
    \Artin{b}_2
      & 4z + 1
      & 0
      & \frac{-z - 1}{3z - 1}
      & 3 \\ 
    \Artin{b}_3
      & \frac{126z^7 - \cdots
                 + 34z + 1}
             {36z^6 - \cdots
                 - 11z + 1}
      & 5.006\ldots
      & \frac{6z^5 + \cdots
                 - 3z - 1}
             {60z^5 - \cdots
                 + 15z - 1}
      & 9.568\ldots \\
    \Artin{b}_4
      &
      & 18.02\ldots
      &
      & 31.29\ldots \\
    \Artin{b}_5
      &
      & 61.74\ldots
      &
      & 104.07\ldots \\
    \Artin{b}_6
      &
      & 207.73\ldots
      &
      & 350.24\ldots \\
    \Artin{D}_2
      & 2z + 1
      & 0
      & \frac{-z - 1}{z - 1}
      & 1 \\ 
    \Artin{D}_3
      & \frac{4z^6 - \cdots
                 + 97z + 1}
             {z^4 - 10z^3 + 15z^2 - 7z + 1}
      & 3.532\ldots
      & \frac{6z^3 - 3z^2 - 14z - 1}{6z^3 - 15z^2 + 8z - 1}
      & 5.449\ldots \\ 
    \Artin{D}_4
      & \frac{14592z^{24} - \cdots
                  + 3505z + 1}
             {16z^{22} - \cdots
                  - 71z + 1}
      & 19.66\ldots
      & \frac{-360z^5 + \cdots
                  - 146z - 1}
             {360z^5 - \cdots
                  + 44z - 1}
      & 32.68\ldots \\
    \Artin{d}_2
      & 2z + 1
      & 0
      & \frac{-z - 1}{z - 1}
      & 1 \\ 
    \Artin{d}_3
      & \frac{8z^5 - \cdots
                 + 23z + 1}
             {4z^4 - 8z^3 + 8z^2 - 5z + 1}
      & 3.130\ldots
      & \frac{2z^4 + 4z^3 - 5z^2 + 4z + 1}{10z^4 - 20z^3 + 19z^2 - 8z + 1}
      & 4.839\ldots \\ 
    \Artin{d}_4
      & \frac{1920z^{15} + \cdots
                 + 213z + 1}
             {6400z^{14} - \cdots
                 - 24z + 1}
      & 11.24\ldots
      & \frac{-20z^7 - \cdots
                 - 22z - 1}
             {400z^7 - \cdots
                 + 26z - 1}
      & 18.40\ldots \\
    \Artin{d}_5
      &
      & 40.05\ldots
      &
      & 66.34\ldots \\
    \Artin{d}_6
      &
      & 139.58\ldots
      &
      & 234.58\ldots \\
    \Artin{e}_6
      &
      & 181.24\ldots
      &
      & 339.75\ldots \\
    \Artin{f}_4
      & \frac{-561126113280z^{40} + \cdots
                 - 505z - 1}
             {96745881600z^{39} - \cdots
                 + 66z - 1}
      & 26.92\ldots
      & \frac{-720z^{10} - \cdots
                 + 28z + 1}
             {47520z^{10} - \cdots
                 - 75z + 1}
      & 60.55\cdots \\
    \Artin{G}_2
      & 30z + 1
      & 0
      & \frac{-5z - 1}{5z - 1}
      & 5 \\
    \Artin{g}_2
      & 6z + 1
      & 0
      & \frac{-z - 1}{5z - 1}
      & 5 \\
    \Artin{H}_3
      & \frac{1364z^{20} - \cdots
                 + 1758z + 1}
             {4z^{18} - \cdots
                 - 34z + 1}
      & 13.02\ldots
      & \frac{72z^4 - 196z^3 + 77z^2 + 76z + 1}{72z^4 - 244z^3 + 229z^2 - 42z + 1}
      & 35.79\ldots \\
    \Artin{h}_3
      & \frac{400z^8 - \cdots
                 - 67z - 1}
             {64z^7 - \cdots
                 + 13z - 1}
      & 7.924\ldots
      & \frac{8z^6 + \cdots
                 + 4z + 1}
             {168z^6 - \cdots
                 - 26z + 1}
      & 20.10\ldots \\
    \Artin{h}_4
      &
      & 61.56\ldots
      &
      & 217.25\ldots \\
    \Artin{I}_2(p)
      & p(p-1)z + 1
      & 0
      & \frac{-(p-1)z - 1}{(p-1)z - 1}
      & p-1 \\
    \Artin{i}_2(p)
      & pz + 1
      & 0
      & \frac{-z - 1}{(p-1)z - 1}
      & p-1
  \end{tabular}
  \caption{Generating functions and exponential growth rates.}
  \label{Tab:GrowthRates}
\end{table}

In all the examples listed in \autoref{Tab:GrowthRates} one has $\alpha < \beta$, that is, all of these monoids satisfy the conditions of \autoref{Thm:BoundedPenetrationDistance}.  We can also see that, in
addition to $T_1$, the Garside monoids corresponding to Coxeter groups with 2 simple reflections also have the property that the penetration distance (not just its expected value) is bounded.
We also remark that, with the exception of the monoids $T_1$, $T_2$, $\Artin{D}_2$ and $\Artin{d}_2$, all monoids listed in \autoref{Tab:GrowthRates} have the property that the acceptor of $G^+_{(*)}$ is strongly connected.  For all classical Garside monoids of type $\Artin{A}$ this follows from \cite[Lemma 3.4]{Caruso13}.

\autoref{Fig:alpha-beta} shows plots of $\alpha$ against $\beta$ for
the monoids listed in \autoref{Tab:GrowthRates}.  Lines are drawn trough the points for the two largest monoids in each series.  While the number of points is small, the data is very suggestive; we conjecture that, in each series, the points asymptotically lie on a straight line above the diagonal.  This would, in particular, imply that the conditions of \autoref{Thm:BoundedPenetrationDistance} are satisfied, and thus the expected penetration distance is uniformly bounded, for all monoids in the series considered.

\begin{figure}[!htbp]
\centering
\subfloat[Type $\Artin{A}$.]{
  \begin{tikzpicture}[spy using outlines={
          circle,
          magnification=3,
          connect spies
      }]
    \begin{axis}[
        height=0.7\textwidth,
        width=0.9\textwidth,
        xlabel=$\alpha$,
        ylabel=$\beta$,
        xmin=0,
        ymin=0,
      ]
      \addplot[
        scatter,mark=*,only marks, 
        point meta=\thisrow{color},
        nodes near coords*={\ifnum\label=1 $\expandafter\Artin\group$\fi},
        visualization depends on={value \thisrow{group} \as \group},
        visualization depends on={value \thisrow{label} \as \label}
      ] 
      table [x=alpha, y=beta] {tables/growth_rates_A.tab};
      \addplot [domain=0:75, samples=2] {1.46068*x - 0.02049};  
      \addplot [domain=0:75, samples=2] {1.4161*x + 0.1339};    

      \coordinate (a2) at (axis cs:0,2);
      \coordinate (a3) at (axis cs:3.53,5.45);
      \coordinate (a6) at (axis cs:73.96,104.87);

      \coordinate (spypoint) at (axis cs:2,3);
      \coordinate (magnifyglass) at (axis cs:63,38);
    \end{axis}
    \spy [gray, size=4cm] on (spypoint)
                            in node[fill=white] at (magnifyglass);

    \node[below right] at (a2) {$\Artin{a}_2$};
    \node[below right] at (a3) {$\Artin{a}_3$};
    \node[below right] at (a6) {$\Artin{a}_6$};
  \end{tikzpicture}
}

\subfloat[Types $\Artin{B}$ and $\Artin{E}$.]{
  \begin{tikzpicture}[spy using outlines={
          circle,
          magnification=3,
          connect spies
      }]
    \begin{axis}[
        height=0.7\textwidth,
        width=0.9\textwidth,
        xlabel=$\alpha$,
        ylabel=$\beta$,
        xmin=0,
        ymin=0,
      ]
      \addplot[
        scatter,mark=*,only marks, 
        point meta=\thisrow{color},
        nodes near coords*={\ifnum\label=1 $\expandafter\Artin\group$\fi},
        visualization depends on={value \thisrow{group} \as \group},
        visualization depends on={value \thisrow{label} \as \label}
      ] 
      table [x=alpha, y=beta] {tables/growth_rates_BE.tab};
      \addplot [domain=0:110, samples=2] {1.8461*x + 0.8612};    
      \addplot [domain=0:210, samples=2] {1.68621*x - 0.05355};  

      \coordinate (b2) at (axis cs:0,2);
      \coordinate (b3) at (axis cs:5.01,9.57);

      \coordinate (spypoint) at (axis cs:5,8);
      \coordinate (magnifyglass) at (axis cs:170,125);
    \end{axis}
    \spy [gray, size=4cm] on (spypoint)
                            in node[fill=white] at (magnifyglass);

    \node[below right] at (b2) {$\Artin{b}_2$};
    \node[right] at (b3) {$\Artin{b}_3$};
  \end{tikzpicture}
}
  \caption{Scatter plot of the exponential growth rates.}
  \label{Fig:alpha-beta}
\end{figure}

\begin{figure}[!htbp]
\ContinuedFloat
\centering

\subfloat[Type $\Artin{D}$.]{
  \begin{tikzpicture}[spy using outlines={
          circle,
          magnification=3,
          connect spies
      }]
    \begin{axis}[
        height=0.7\textwidth,
        width=0.9\textwidth,
        xlabel=$\alpha$,
        ylabel=$\beta$,
        xmin=0,
        ymin=0,
      ]
      \addplot[
        scatter,mark=*,only marks, 
        point meta=\thisrow{color},
        nodes near coords*={\ifnum\label=1 $\expandafter\Artin\group$\fi},
        visualization depends on={value \thisrow{group} \as \group},
        visualization depends on={value \thisrow{label} \as \label}
      ] 
      table [x=alpha, y=beta] {tables/growth_rates_D.tab};
      \addplot [domain=0:140, samples=2] {1.6888*x - 0.5114};    
      \addplot [domain=0:140, samples=2] {1.691*x - 1.386 };     

      \coordinate (d2) at (axis cs:0,1);
      \coordinate (d3) at (axis cs:3.13,4.84);

      \coordinate (spypoint) at (axis cs:2,3);
      \coordinate (magnifyglass) at (axis cs:115,80);
    \end{axis}
    \spy [gray, size=4cm] on (spypoint)
                            in node[fill=white] at (magnifyglass);

    \node[below right] at (d2) {$\Artin{d}_2$};
    \node[right] at (d3) {$\Artin{d}_3$};
  \end{tikzpicture}
}

\subfloat[Other monoids.]{
  \begin{tikzpicture}[spy using outlines={
          circle,
          magnification=3,
          connect spies
      }]
    \begin{axis}[
        height=0.7\textwidth,
        width=0.9\textwidth,
        xlabel=$\alpha$,
        ylabel=$\beta$,
        xmin=0,
        ymin=0,
      ]
      \addplot[
        scatter,mark=*,only marks, 
        point meta=\thisrow{color},
        nodes near coords*={\ifnum\label=1 $\expandafter\Artin\group$\fi},
        visualization depends on={value \thisrow{group} \as \group},
        visualization depends on={value \thisrow{label} \as \label}
      ] 
      table [x=alpha, y=beta] {tables/growth_rates_other.tab};
      \addplot [domain=0:65, samples=2] {3.676*x - 9.011};      

      \coordinate (T1) at (axis cs:0,2);
      \coordinate (T2) at (axis cs:1,11.72);

      \coordinate (spypoint) at (axis cs:1,7);
      \coordinate (magnifyglass) at (axis cs:52,77);
    \end{axis}
      \spy [gray, size=4cm] on (spypoint)
                            in node[fill=white] at (magnifyglass);

    \node[below right] at (T1) {$T_1$};
    \node[above right] at (T2) {$T_2$};
  \end{tikzpicture}
}
  \caption{Scatter plot of the exponential growth rates (ctd.).}
\end{figure}

\section*{Acknowledgements}%
The authors thank the technical staff of the School of Computing, Engineering and Mathematics at the University of Western Sydney for their outstanding support, which significantly contributed to this project.

\newcommand{\etalchar}[1]{$^{#1}$}

\bigskip

\begin{minipage}[t]{0.45\textwidth}
\noindent\textbf{Volker Gebhardt}\\
\noindent E-mail: \texttt{v.gebhardt@uws.edu.au}
\end{minipage}
\hfill
\begin{minipage}[t]{0.5\textwidth}
\noindent\textbf{Stephen Tawn}\\
\noindent E-mail: \texttt{stephen@tawn.co.uk}\\
\noindent URL: \url{http://www.stephentawn.info}
\end{minipage}
\medskip
\begin{center}
University of Western Sydney\\
Centre for Research in Mathematics\\
Locked Bag 1797, Penrith NSW 2751, Australia\\
\noindent URL: \url{http://www.uws.edu.au/crm}
\end{center}

\end{document}